\title{\LARGE \bf
Optimality of certainty equivalence in expected value problems for uncertain linear systems
}
\newcounter{remark}
\newenvironment{remark}{%
   \refstepcounter{remark}\begin{trivlist}\item[]\hspace*{.4cm}{\em
   Remark\,\theremark:\,}}
   {\cvd\end{trivlist}}
\newcommand{\cvd}{\hfill$\Box$\vspace{0em}\par\noindent}
\newtheorem{lemma}{Lemma}
\newtheorem{theorem}{Theorem}
\newtheorem{proposition}{Proposition}
\newtheorem{assumption}{Assumption}
\newtheorem{definition}{Definition}
\newcommand{\eps}{\varepsilon}
\newcommand{\beq}{\begin{equation}}
\newcommand{\eeq}{\end{equation}}
\newcommand{\beqar}{\begin{eqnarray}}
\newcommand{\eeqar}{\end{eqnarray}}
\newcommand{\beqarno}{\begin{eqnarray*}}
\newcommand{\eeqarno}{\end{eqnarray*}}
\newcommand{\ba}[1]{\begin{array}{#1}}
\newcommand{\ea}{\end{array}}
\DeclareMathAlphabet\mathbfcal{OMS}{cmsy}{b}{n}
\DeclareMathOperator{\Aut}{\mathrm{Aut}}
\definecolor{red}{rgb}{0.738,0,0}
\author{Frank Chuang, Claus Danielson, and Francesco Borrelli
\thanks{F. Borrelli is Associate Professor in the 
				Dept. of Mechanical Engineering,
        University of California, Berkeley, USA 
        {\tt\small fborrelli@me.berkeley.edu}}
\thanks{F. Chuang is a graduate student in the Dept. of Mechanical Engineering, 
				University of California, Berkeley, USA 
				{\tt\small fc4321@berkeley.edu}
				}
\thanks{C. Danielson is a graduate student in the Dept. of Mechanical Engineering, 
				University of California, Berkeley, USA 
				{\tt\small claus.danielson@me.berkeley.edu}
				}
}
\begin{document}
\maketitle

\begin{abstract}
In this paper we study the optimality of the certainty equivalence approximation in robust finite-horizon optimization problems with expected cost.  We provide an algorithm for determining the subset of the state-space for which the certainty equivalence technique is optimal.  In the second part of the paper we show how patterns in the problem structure called symmetries can be used to reduce the computational complexity of the previous algorithm.  Finally we demonstrate our technique through numerical examples.  In particular we examine networked battery systems and radiant slab building control, for which we show the certainty equivalence controller is optimal over the entire operating range.  
\end{abstract}

\section{Introduction}
In this paper we consider finite-time expected value optimal control of linear systems with additive stochastic disturbance subject to robust constraints. We consider a cost separable in time so that dynamic programming can be applied.

In general finding the exact optimal feedback control law is intractable. However there are several approaches for approximating the optimal feedback law.  One approach is certainty equivalence in which the stochastic disturbance is replaced in the cost by its expected value.  The objective of this paper is determine the subset of states for which the certainty equivalence controller is optimal.  

In this paper we provide an algorithm for calculating a region of the state-space in which the certainty equivalence controller is optimal.  Our algorithm is based on dynamic programming.  At each time step a set of multiparametric programs is solved to obtain the cost to go.  In the second portion of the paper, we investigate how symmetry of the model predictive problem can be exploited to decrease computation time and memory usage of the explicit certainty equivalence controller.

In the numerical examples section, we apply our technique to a simple integrator system, battery network system, and building HVAC system. For these systems, we identify regions of the state-space for which certainty equivalence provides the optimal feedback solution. In the case of the radiant slab building control system, we find that certainty equivalence is valid for the entire of the operating regime. Where applicable, we also demonstrate the use of system symmetry to reduce computation time and memory usage.

For simple problems, such as the unconstrained linear quadratic control, the exact optimal solution can be computed via dynamic programming \cite{BET95}. For more complex problems tractable alternatives to computing exact feedback solutions to the expected-value problem are available, include using Monte Carlo simulations, affine disturbance feedback, open-loop input sequences, and certainty equivalence. For general distributions and costs, the problem is often solved approximately using Monte Carlo sampling \cite{SBLecture}. The effect of finite sampling with respect to the original expected value problem was investigated by Wang and Ahmed \cite{WangAhmed}. For certain distributions, such as Gaussian, affine feedback can be used to approximate the feedback solution and propogate the distribution forward. Goulart, Kerrigan, and Maciejowski \cite{goulart} detail the use of affine disturbance feedback in the robust control of linear systems with additive disturbance. The solution of the expected value problem using affine feedback subject to probabilistic constraints was addressed by Ma \cite{ma2012fast} in the context of chance-constrained stochastic MPC. While affine disturbance feedback is computationally efficient, it is conservative because, in general, the optimal feedback policies are non-linear. Bertsimas, Iancu, and Parrilo \cite{BIP} have proven the optimality of affine disturbance feedback for a specific class of 1-D problems. Meanwhile, Hadjiyiannis, Goulart, and Kuhn \cite{HGK} and more recently Van Parys, Goulart, and Morari \cite{VGM} have characterized the suboptimality of affine disturbance feedback in expected value problems. Alternatively, open-loop input sequences can be used which generally lead to even more conservative solutions. The advantage is the faster computation time over affine feedback. In the certainty equivalence principle, the random disturbance sequence is replaced by its expected value. The expected value is removed from the cost and a nominal optimization problem is solved instead. While based on potentially bad approximations, certainty equivalence often performs very well when applied to problems in economics \cite{SBLecture}. We examine the optimality of certainty equivalence and also how to use the approximation to compute explicit controllers.

Symmetry has been used extensively in numerous fields to reduce computational complexity. In recent years symmetry has been applied to optimization to solve linear-programs
\cite{Bodi2010}, semi-definite programs \cite{Gatermann2004}, and integer-programs \cite{Bodi2011}.  In \cite{Fagnani1991} and \cite{Hazewinkel1983} symmetry was studied in control theory to decompose large-scale systems into invariant subsystems.  In \cite{Cogill2008} the authors exploited symmetry to reduce the computational complexity of $\mathcal{H}_2$ and $\mathcal{H}_\infty$ controllers.  In \cite{Danielson2014} the authors studied symmetry in linear model predictive control.  This paper extends these results to dynamic programming to solve the expected value problem with robust constraints.

\section{Problem Definition}
Consider the linear time-invariant discrete-time system with additive disturbance
\begin{equation}
x_{t+1}=Ax_t+Bu_t+d_t,\: t\ge0
\label{eq:system}
\end{equation}
where $x_t\in\mathbb{R}^n$ is the system state, $u_t\in\mathbb{R}^p$ the controlled input, $d_t\in\mathbb{R}^n$ the disturbance, $A\in\mathbb{R}^{n\times n}$, and $B\in\mathbb{R}^{n\times p}$. The system is subject to the constraint
\begin{equation}
x_t\in \mathcal{X}_t\text{ and }
u_t\in \mathcal{U},~\forall t\ge0,
\label{eq: constraints}
\end{equation}
where $\mathcal{X}_t\subset\mathbb{R}^n$ and $\mathcal{U}\subset\mathbb{R}^p$ are polytopes. The disturbances $\{d_0,d_1,...\}$ are random variables which are independently distributed. We assume that
\[d_t\in \mathcal{D}_t,~\forall t\ge0,\]
where $\mathcal{D}_t\subset \mathbb{R}^n$ is a polytope. Note that the disturbances are not required to have zero mean. Thus our method can be extended to affine systems by simply lumping the affine term with the disturbance.

Consider the cost			
\begin{equation*}
E\left(f_N(x_N,u_N) + \sum_{t=0}^{N-1}f_t(x_t,u_t)\right),
\label{eq: evc}
\end{equation*}
where $N$ is a fixed horizon length and the functions $f_t: \mathbb{R}^n\times\mathbb{R}^p\to\mathbb{R}$ are jointly convex in $x_t$ and $u_t$ for all $0\le t\le N$. We are interested in finding the feedback control policies which minimize the above cost subject to constraints. That is, we are interested in the solution to the problem
\begin{equation}
\begin{aligned}
\min_{\pi_0,...,\pi_{N-1}}&E\left(f_N(x_N,u_N) + \sum_{t=0}^{N}f_t(x_t,u_t)\right)\\
\text{subject to }& x_t\in\mathcal{X}_t,\:\forall d_t\in\mathcal{D}_t\: \forall t\ge0
\label{eq: evp}
\end{aligned}
\end{equation}
where $u_t=\pi_t(x_t)$ and $\pi_t: \mathcal{X}_t \rightarrow \mathcal{U}$ is a mapping from the system state $x_t\in\mathbb{R}^n$ to the input space $u_t \in \mathcal{U}$ for $t=0,\dots,N-1$.

\subsection{Exact Controller using Dynamic Programming}
Problem (\ref{eq: evp}) can be solved using dynamic programming in the following sense. The terminal cost is defined as
\[J^*_N(x_N)=f_N(x_N,u_N)\]
and for each time $t=N-1,\dots,0$ we calculate the cost-to-go by solving the following optimization problem 
\begin{equation}
\label{eq:exact-problem}
\begin{aligned}
J^*_{t}(x_t)=&\inf_{u_t\in\mathcal{U}}J_t(x_t,u_t)\\
\text{subject to }&  A x_t + B u_t  + d_t \in \mathcal{X}_{t+1}\:\forall d_t\in\mathcal{D}_t
\end{aligned}
\end{equation}
for $x_t \in \mathcal{X}_t$ where 
\begin{equation}
J_{t}(x_t,u_t)=f_t(x_t,u_t) + E_{d_t}\left(J^*_{t+1}(Ax_t+Bu_t+d_t)\right).
\label{eq: dprecurse}
\end{equation}
For each time $t$ the optimal control policy $u_t = \pi^*_t(x_t)$ is the optimizer $\pi^*_t: \mathcal{X}_t \rightarrow \mathcal{U}$ of Problem (\ref{eq:exact-problem}). Note that in general dynamic programming is intractable because $E_{d_t}\left(J^*_{t+1}(Ax_t+Bu_t+d_t)\right)$ often does not have a closed-form solution.

\subsection{Certainty Equivalence Controller}
One approach to obtain an approximation to the controller $\pi^*_t(x_t)$ is to use the certainty equivalence principle. The certainty equivalence controller can be obtained using dynamic programming as follows.  The terminal cost is defined as
\[\tilde{J}^*_N(x_N)=f_N(x_N,u_N)\]
and for each time $t = N-1,\dots,0$ we calculate the cost-to-go by solving the following optimization problem
\begin{equation}
\label{eq: cedp}
\begin{aligned}
\tilde{J}^*_{t}(x_t)=&\inf_{u_t\in\mathcal{U}} \tilde{J}_{t}(x_t,u_t)\\
\text{subject to } &A x_t + B u_t + d_t \in\mathcal{X}_{t+1}\:\forall d_t\in\mathcal{D}_t
\end{aligned}
\end{equation}
for $x_t \in \mathcal{X}_t$ where 
\[\tilde{J}_{t}(x_t)=f_t(x_t,u_t) + \tilde{J}^*_{t+1}(Ax_t+Bu_t+E(d_t)).\]  
For each time $t$ the optimal control policy $u_t = \tilde{\pi}^*_t(x_t)$ is the optimizer $\tilde{\pi}^*_t: \mathcal{X}_t \rightarrow \mathcal{U}$ of Problem (\ref{eq: cedp}).  

The difference between the exact and certainty equivalence control problems is the cost minimized at each stage.  The exact problem includes expected value of the cost-to-go $E(J^*_{t+1}(Ax+Bu+d))$.  The certainty equivalence problem includes the cost-to-go $\tilde{J}^*_{t+1}(Ax_t+Bu_t+E(d_t))$ with the expected disturbance. This renders the dynamic programming steps tractable for problems with quadratic cost and reasonable size.

\section{Certainty Equivalence for Unconstrained Control}
\label{sec: lqr}
It is well documented in the literature that the optimal unconstrained finite-horizon linear quadratic stochastic controller is equivalent to the finite-horizon LQR controller. For the remainder of the paper, we consider Problem (\ref{eq: evp}) with a fixed horizon $N$ and quadratic stage costs
\begin{subequations}
\label{eq: quadcost}
\begin{align}
f_N(x_N)&=x_N^TQ_Nx_N \\
f_t(x_t,u_t)&=x_t^TQ_tx_t+u_t^TR_tu_t
\end{align}
\end{subequations}
where $Q_t\succ 0$ and $R_t\succ 0$. Suppose for now that we do not impose constraints on the inputs ($\mathcal{U} = \mathbb{R}^p$) and states ($\mathcal{X}_t = \mathbb{R}^n$). Then if $E(d_t)=0$ for all $t$, the optimal controller is independent of the distribution of $d_t$ and is given recursively as
\[u_t^*=-(B^TP_{t+1}B+R_t)^{-1}B^TP_{t+1}A x_t,\]
where $P_{t+1}$ is the solution to the discrete Riccati equation
\begin{equation}
P_{t-1}=A^TP_{t}A-A^TP_{t}B(B^TP_{t}B+R_t)^{-1}B^TP_{t}A+Q_{t-1} 
\end{equation}
and $P_N=Q_N$. Therefore, the optimal controller is equivalent to the controller given by the certainty equivalence approach of replacing the disturbance $d_t$ by its expected value $E(d_t)=0$, which gives the conventional LQR controller. 

We briefly explain why the quadratic cost renders this result. Suppose the cost to go $J^*_{t+1}(x_{t+1})$ is a quadratic function so that $J^*_{t+1}(x_{t+1})=x^TQx+q^Tx+C$. Then by straightforward substitution we have
\begin{multline*}
J^*_{t+1}(Ax_t+Bu_t+d_t)\\
=g(x_t,u_t)+2x_t^TA^TQd_t+2u_t^TB^TQd_t+q^Td_t+d_t^TQd_t,
\end{multline*}
where $g(x_t,u_t)$ is a quadratic function of only $x_t$ and $u_t$.

Therefore, the cost-to-go is composed of terms linear in $d_t$ and a quadratic term which is a function of $d_t$ only. Thus when $E(d_t) = 0$, we have
\begin{align*}
&E_{d_t}(J^*_{t+1}(Ax_t+Bu_t+d_t))\\
&=g'(x_t,u_t)+2x_t^TA^TQE(d_t)+2u_t^TB^TQE(d_t)+q^TE(d_t), \\
&=g'(x_t,u_t) 
\end{align*}
where $g'(x_t,u_t)=g(x_t,u_t)+\mathrm{trace}(E(d_t^Td_t)Q)$. Therefore, $E_{d_t}(J^*_{t+1}(Ax_t+Bu_t+d_t))-J^*_{t+1}(Ax_t+Bu_t+E(d_t))=\mathrm{trace}(E(d_t^Td_t)Q)$, which is a constant. This means both cost functions have the same optimizer. The fact that the optimal control law is affine renders the cost-to-go at each time step to be quadratic. This implies that the certainty equivalence approximate of the problem will give the same optimizers as the original problem. 

\section{Certainty Equivalence for Constrained Control}

In this section we consider the optimality of the certainty equivalence controller for finite-time constrained optimal control problems.  

\subsection{One-Step Certainty Equivalence Controller}
From the discussion in the previous section, it is straightforward to show that for the linear quadratic stochastic optimal control problem with a one-step horizon $N=1$ the certainty equivalence controller is optimal.

\begin{proposition}
Consider Problem (\ref{eq: evp}) with the horizon $N=1$ and cost (\ref{eq: quadcost}). Then the certainty equivalence controller is optimal ($\pi^*_0(x_0)=\tilde{\pi}^*_0(x_0)$) and the difference between the exact and certainty equivalence cost functions $J_0^*(x_0)-\tilde{J}_0^*(x_0)$ is a constant.
\label{prop: 1stepquad}
\end{proposition}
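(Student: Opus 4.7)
The plan is to verify the claim by direct computation at the single stage $t=0$, using the fact that the terminal cost $f_1(x_1) = x_1^T Q_1 x_1$ is quadratic. Since both the exact problem (\ref{eq:exact-problem}) and the certainty equivalence problem (\ref{eq: cedp}) enforce exactly the same constraint $A x_0 + B u_0 + d_0 \in \mathcal{X}_1$ for all $d_0 \in \mathcal{D}_0$, the two optimization problems share an identical feasible set in $u_0$. Optimality of $\tilde\pi^*_0$ will therefore follow if I can show that the two objectives differ only by a term that is constant in $(x_0, u_0)$.

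Next I would expand the terminal cost along the dynamics. Writing $y \eqdef A x_0 + B u_0$, I have
\begin{equation*}
J^*_1(y + d_0) = y^T Q_1 y + 2\, y^T Q_1 d_0 + d_0^T Q_1 d_0,
\end{equation*}
so that, taking expectation over $d_0$,
\begin{equation*}
E_{d_0}\bigl(J^*_1(y + d_0)\bigr) = y^T Q_1 y + 2\, y^T Q_1 E(d_0) + E\bigl(d_0^T Q_1 d_0\bigr).
\end{equation*}
On the other hand, substituting $E(d_0)$ directly gives
\begin{equation*}
\tilde J^*_1\bigl(y + E(d_0)\bigr) = y^T Q_1 y + 2\, y^T Q_1 E(d_0) + E(d_0)^T Q_1 E(d_0).
\end{equation*}
Subtracting, the two expressions differ by $E(d_0^T Q_1 d_0) - E(d_0)^T Q_1 E(d_0) = \mathrm{trace}\bigl(Q_1\, \mathrm{Cov}(d_0)\bigr)$, which depends only on the distribution of $d_0$ and on $Q_1$, not on $x_0$ or $u_0$.

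To finish, I add $f_0(x_0,u_0)$ to both objectives and conclude that $J_0(x_0,u_0) - \tilde J_0(x_0,u_0)$ is a constant for every feasible $u_0$. Because the feasible sets coincide and the objectives differ by a constant, every minimizer of one is a minimizer of the other, so $\pi^*_0(x_0) = \tilde\pi^*_0(x_0)$, and $J^*_0(x_0) - \tilde J^*_0(x_0) = \mathrm{trace}(Q_1\, \mathrm{Cov}(d_0))$ is the promised constant. There is no real obstacle here beyond the bookkeeping of the quadratic expansion already carried out in Section~\ref{sec: lqr}; the crucial observation, which is worth stating explicitly, is that the robust constraint set is identical across the two problems so the argument reduces purely to a comparison of objectives.
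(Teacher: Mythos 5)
Your proof is correct and follows essentially the same route as the paper: the paper's proof simply observes that it suffices to show $E_{d_0}(J_1^*(Ax_0+Bu_0+d_0))-J_1^*(Ax_0+Bu_0+E(d_0))$ is constant and defers to the quadratic expansion already carried out in Section~\ref{sec: lqr}, which is exactly the computation you perform explicitly. Your version is slightly more complete in two minor respects --- you state explicitly that the robust feasible sets of (\ref{eq:exact-problem}) and (\ref{eq: cedp}) coincide, and you carry the expansion through for $E(d_0)\neq 0$ (yielding $\mathrm{trace}(Q_1\,\mathrm{Cov}(d_0))$ rather than the zero-mean specialization in Section~\ref{sec: lqr}) --- but the underlying argument is the same.
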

\begin{proof}
Note that it is sufficient to show that $E_{d_0}(J_1^*(Ax_0+Bu_0+d_0))-J_1^*(Ax_0+Bu_0+E(d_0))$ is a constant for all $x_0\in\mathbb{R}^n$, $u_0\in\mathcal{U}$, and $d_0\in\mathcal{D}_0$. The proof follows directly from the discussion in section \ref{sec: lqr}.
\end{proof}
In the following sections, we determine the subset of the state-space where this result can be extended for horizon lengths larger than $N=1$.  

\subsection{Optimality of Certainty Equivalence}
\label{sec: pert}
For certain subsets of the state space, it is possible to show that the certainty equivalence approximation will be the optimal solution to the original Problem (\ref{eq: evp}).  Before showing this result we review the relevant results for multiparametric quadratic programming \cite{BBM}.

\begin{theorem}
Consider the following multiparametric program
\begin{subequations}
\label{eq:mpp}
\begin{alignat}{2}
J^*(x)=& \mathrm{minimize} ~&& J(z,x)  \\
&\mathrm{subject~to} ~&& z \in \mathcal{Z}(x) \\
& && x \in \mathcal{X}
\end{alignat}
\end{subequations}
where $z$ are the decision variables, $x$ are the parameters, $J(z,x) = \tfrac{1}{2} z^T Hz$ is a quadratic function, 
$\mathcal{F}(x) = \{ z : H z + G x \leq K \}$ is the feasible region, and $\mathcal{X}$ is a polytope.  Then
\begin{enumerate}
\item The optimizer $z^*: \mathcal{X} \rightarrow \mathcal{Z}$ is a continous piecewise affine on polyhedra function
\begin{align*}
z^*(x) = \begin{cases}
F_1 x + G_1 & \text{ for } x \in \mathcal{R}_1 \\ 
\qquad\vdots \\
F_r x + G_r & \text{ for } x \in \mathcal{R}_r
\end{cases} 
\end{align*}
\item The value function $J^*(x)$ is a convex piecewise quadratic on polyhedra function
\begin{align*}
J^*(x) = \begin{cases}
J_1^*(x) & \text{ for } x \in \mathcal{R}_1 \\ 
\quad\vdots \\
J_r^*(x) & \text{ for } x \in \mathcal{R}_r
\end{cases} 
\end{align*}
where $J_i^*(x)$ are quadratic functions.  
\item The closure of the critical regions $\mathcal{R}_i$ are a polyhedra.  The critical region partition is denoted by $\mathbfcal{R} = \{ \mathcal{R}_1, \dots, \mathcal{R}_r \}$.
\end{enumerate}
\label{thm: mpQP}
\end{theorem}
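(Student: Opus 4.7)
The plan is to prove the theorem through a standard KKT/active-set analysis, following the classical multiparametric quadratic programming argument. Since the cost $J(z,x)=\tfrac{1}{2}z^T H z$ is strictly convex in $z$ (assuming $H\succ 0$) and the constraints $\hat{H}z+\hat{G}x\le K$ are jointly linear in $(z,x)$, for every parameter $x\in\mathcal{X}$ the program (\ref{eq:mpp}) has a unique optimizer $z^*(x)$, and the optimum is characterized by the KKT conditions: stationarity $Hz+\hat{H}^T\lambda=0$, primal feasibility, dual feasibility $\lambda\ge 0$, and complementarity $\lambda_i(\hat{H}_i z+\hat{G}_i x-K_i)=0$.

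First I would enumerate active sets. For each candidate active set $\mathcal{A}\subseteq\{1,\dots,q\}$ whose constraint gradients satisfy LICQ, I would solve the linear system formed by stationarity together with the equalities $\hat{H}_{\mathcal{A}}z+\hat{G}_{\mathcal{A}}x=K_{\mathcal{A}}$. Eliminating $z$ via stationarity ($z=-H^{-1}\hat{H}_{\mathcal{A}}^T\lambda_{\mathcal{A}}$) yields
\begin{equation*}
\lambda_{\mathcal{A}}(x) = \bigl(\hat{H}_{\mathcal{A}} H^{-1}\hat{H}_{\mathcal{A}}^T\bigr)^{-1}\bigl(\hat{G}_{\mathcal{A}} x-K_{\mathcal{A}}\bigr),
\end{equation*}
so both $\lambda_{\mathcal{A}}(x)$ and hence $z_{\mathcal{A}}^*(x)$ are affine in $x$. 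The corresponding critical region $\mathcal{R}_{\mathcal{A}}$ is then defined as the set of $x$ for which this candidate is actually optimal, namely the intersection of the dual feasibility inequalities $\lambda_{\mathcal{A}}(x)\ge 0$ and the primal feasibility inequalities $\hat{H}_{\mathcal{A}^c}z_{\mathcal{A}}^*(x)+\hat{G}_{\mathcal{A}^c}x\le K_{\mathcal{A}^c}$. Because all of these are affine in $x$, each $\mathcal{R}_{\mathcal{A}}$ is a polyhedron, proving part 3. Since the collection $\{\mathcal{R}_{\mathcal{A}}\}_{\mathcal{A}}$ (intersected with $\mathcal{X}$) covers $\mathcal{X}$ and $z^*(x)=z_{\mathcal{A}}^*(x)$ on $\mathcal{R}_{\mathcal{A}}$, the optimizer is piecewise affine. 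Plugging $z_{\mathcal{A}}^*(x)$ into $J$ yields a quadratic $J_{\mathcal{A}}^*(x)$ on each $\mathcal{R}_{\mathcal{A}}$, giving the piecewise-quadratic structure in part 2.

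Continuity of $z^*$ and $J^*$ across the boundary of two adjacent critical regions $\mathcal{R}_{\mathcal{A}},\mathcal{R}_{\mathcal{A}'}$ would be argued via uniqueness of the optimizer: on the shared boundary the KKT conditions are satisfied by both $z_{\mathcal{A}}^*(x)$ and $z_{\mathcal{A}'}^*(x)$, and since the minimizer is unique they must coincide, forcing the pieces to agree on the overlap. Finally, convexity of $J^*$ follows from the standard parametric convexity argument: because $J(z,x)$ is convex in $(z,x)$ and the feasible set $\{(z,x):\hat{H}z+\hat{G}x\le K,\ x\in\mathcal{X}\}$ is convex, the value function obtained by partial minimization in $z$ is convex in $x$.

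The main obstacle will be handling degeneracy, both primal (more than $\dim z$ active constraints at a vertex) and dual (zero multipliers at an active constraint), which can cause overlapping critical regions or lower-dimensional cells and require either a small lexicographic perturbation or explicit merging of regions to preserve continuity. A careful enumeration algorithm (as in \cite{BBM}) that prunes empty or lower-dimensional regions, together with a well-defined tie-breaking rule, resolves these cases without affecting the piecewise-affine/piecewise-quadratic structure claimed in the theorem.
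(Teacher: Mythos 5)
Your KKT/active-set argument is the standard proof of this result and is correct in outline; note that the paper itself does not prove this theorem but states it as a known result cited from the multiparametric programming literature (\cite{BBM}), and your argument is essentially the one found there. The only points to make explicit are the assumption $H \succ 0$ (needed for the unique optimizer and for inverting $\hat{H}_{\mathcal{A}} H^{-1} \hat{H}_{\mathcal{A}}^T$ under LICQ), which the paper leaves implicit, and your (sensible) disambiguation of the paper's notational collision between the cost Hessian and the constraint matrix.
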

\subsection{Determining where Certainty Equivalence is Optimal}
\label{sec: recurseX}

Let $\mathbfcal{P}_{t+1}=\{\mathcal{R}^1_{t+1},...,\mathcal{R}^r_{t+1}\}$ be a P-collection of critical regions $\mathcal{R}_{t+1}^i \subseteq \mathcal{X}_{t+1}$ where the certainty equivalence controller is optimal.  In other words $\pi_{t+1}^*(x) = \tilde{\pi}_{t+1}^*(x)$ and $\tilde{J}^*_{t+1}(x)-J^*_{t+1}(x)$ is a constant for all $x \in \mathcal{R}_{t+1}^i$ and $\mathcal{R}_{t+1}^i \in \mathbfcal{P}_{t+1}$.  Note that at time $t=N-1$, the certainty equivalence array $\mathbfcal{P}_{N-1}$ is simply the set of critical regions of $J^*_{N-1}(x_{N-1})$ by Proposition \ref{prop: 1stepquad}.

For some critical region $\mathcal{R}_{t+1}^j \in \mathbfcal{P}_{t+1}$, consider the problem
\begin{equation}
\label{eq: evpsubset}
\begin{aligned}
J^*_{t}(x_{t})=&\inf_{u_t\in\mathcal{U}}J_t(x_t,u_t)\\
\text{subject to }&A x_t + B u_t + d_t \in\mathcal{R}^j_{t+1}\:\forall d_t\in\mathcal{D}_t
\end{aligned}
\end{equation}
and its certainty equivalence approximation
\begin{equation}
\label{eq: cesubset}
\begin{aligned}
\tilde{J}^*_{t}(x_{t})=&\inf_{u_t\in\mathcal{U}}\tilde{J}_t(x_t,u_t)\\
\text{subject to }&A x_t + B u_t + d_t \in\mathcal{R}^j_{t+1}\:\forall d_t\in\mathcal{D}_t.
\end{aligned}
\end{equation}

Since $J^*_{t+1}(x)$ is quadratic in $R^j_{t+1}$, we can use proposition \ref{prop: 1stepquad} to conclude that the optimizers of problems (\ref{eq: evpsubset}) and (\ref{eq: cesubset}) are equal. Thus for states $x_t \in \mathcal{X}_t$ where the optimizers of problem (\ref{eq: evpsubset}) are optimizers of problem (\ref{eq:exact-problem}), we know the certainty equivalence optimizers of problem (\ref{eq: cesubset}) are optimal.  The following proposition provides a condition for determining when the optimizers of problems (\ref{eq: evpsubset}) and (\ref{eq:exact-problem}) are equivalent.  

Before we state the proposition, we first define a few objects to be used. We assume $\mathcal{R}^j_{t+1}$ and $\mathcal{X}_{t+1}$ are normalized. Let $\mathcal{R}^j_{t+1}=\{x\in\mathbb{R}^n|R^xx\le R^c\}$ and $\mathcal{X}_{t+1}=\{x\in\mathbb{R}^n|P^x x\le P^c\}$ be the minimal representations of the two polytopes. Suppose $R^x$ and $P^x$ have $p$ and $q$ columns, respectively. Define $E^j_{t+1}=\{i\in\{1,...,p\}|R^x_i\neq P^x_j \:\forall j\in\{1,...,q\}\}$. That is, $E^j_{t+1}$ represents the row indices of constraints exclusive to $\mathcal{R}^j_{t+1}$ and not $\mathcal{X}_{t+1}$. Define $M^j_{t+1}=\{i\in\{1,...,q\}|P^x_i= R^x_j \text{ for some }j\in\{1,...,p\}\}$. 
\begin{proposition}
\label{prop:constraint-test}
The problem (\ref{eq: evpsubset}) has the same optimizers as problem (\ref{eq:exact-problem}) on the critical regions for which the constraints indexed by $E^j_{t+1}$ are inactive.
\end{proposition}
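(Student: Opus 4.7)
The plan is to treat (\ref{eq: evpsubset}) as a constrained tightening of (\ref{eq:exact-problem}) and show that when the tightening is slack the two programs share the same optimum. Both minimize the identical convex objective $J_t(x_t,u_t)=f_t(x_t,u_t)+E_{d_t}(J^*_{t+1}(Ax_t+Bu_t+d_t))$, which is convex in $u_t$ because $J^*_{t+1}$ is a convex piecewise quadratic (Theorem~\ref{thm: mpQP}) and expectation preserves convexity. Since $\mathcal{R}^j_{t+1}\subseteq\mathcal{X}_{t+1}$, the feasible set of (\ref{eq: evpsubset}) is contained in that of (\ref{eq:exact-problem}), so any optimizer $u^*_t$ of (\ref{eq: evpsubset}) is automatically primal-feasible for (\ref{eq:exact-problem}); it therefore suffices to show $u^*_t$ is a KKT point of the convex program (\ref{eq:exact-problem}).

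First I would eliminate $d_t$ from the robust constraints by worst-casing over the polytope $\mathcal{D}_t$, turning $Ax_t+Bu_t+d_t\in\mathcal{R}^j_{t+1}$ for all $d_t\in\mathcal{D}_t$ into the deterministic system $R^x_i(Ax_t+Bu_t)\le R^c_i-h_{\mathcal{D}_t}(R^x_i)$ for $i=1,\dots,p$, where $h_{\mathcal{D}_t}$ denotes the support function of $\mathcal{D}_t$, and similarly tightening the $\mathcal{X}_{t+1}$ system. By construction of $M^j_{t+1}$, the $i\in M^j_{t+1}$ rows of $\mathcal{R}^j_{t+1}$ are inherited verbatim from $\mathcal{X}_{t+1}$ (same normal and offset, since a facet of $\mathcal{R}^j_{t+1}$ that matches a facet of $\mathcal{X}_{t+1}$ cannot have been tightened by the mpQP construction), whereas the rows indexed by $E^j_{t+1}$ are the genuinely new inequalities introduced when refining $\mathcal{X}_{t+1}$ into the critical region. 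Writing KKT for (\ref{eq: evpsubset}) at $u^*_t$, the hypothesis pins every multiplier on the $E^j_{t+1}$-rows to zero by complementary slackness, so stationarity only involves multipliers on $M^j_{t+1}$-rows and on $\mathcal{U}$.

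The final step is to transfer this KKT certificate to (\ref{eq:exact-problem}). Stationarity, dual feasibility, and complementary slackness on the $M^j_{t+1}$-rows and on $\mathcal{U}$ hold identically because those rows and the objective gradient coincide in the two problems, and primal feasibility follows from $\mathcal{R}^j_{t+1}\subseteq\mathcal{X}_{t+1}$. The main obstacle is handling the remaining rows of $\mathcal{X}_{t+1}$ whose normal is not shared with $\mathcal{R}^j_{t+1}$: each such row is redundant in the minimal representation of $\mathcal{R}^j_{t+1}$, hence implied by the $M^j_{t+1}$- and $E^j_{t+1}$-rows; combined with the hypothesized strict slack of the $E^j_{t+1}$-rows, a short Farkas-type argument forces strict slack of these redundant rows at $u^*_t$, at least on the relative interior of the critical region in question. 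Setting their multipliers to zero then certifies $u^*_t$ as a KKT point of (\ref{eq:exact-problem}), and convexity of (\ref{eq:exact-problem}) yields that the two problems share the same optimizer on the claimed set, completing the proof.
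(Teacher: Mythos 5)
Your proof takes essentially the same route as the paper's: both transfer the KKT certificate of (\ref{eq: evpsubset}) to (\ref{eq:exact-problem}) by copying the multipliers on the shared rows (those indexed by $M^j_{t+1}$) and zeroing the rest, with inactivity of the $E^j_{t+1}$ constraints killing their multipliers via complementary slackness and convexity of the exact cost-to-go converting the KKT point into a global optimizer. One remark: your closing ``Farkas-type argument'' forcing strict slack of the unshared rows of $\mathcal{X}_{t+1}$ is unnecessary (and would not always succeed, e.g.\ at points where several facets of the critical region meet) --- complementary slackness for those rows is already satisfied by assigning them zero multipliers, since $\lambda_i=0$ makes $\lambda_i(P^x_i u - P^c_i)=0$ regardless of activity, and their primal feasibility follows from $\mathcal{R}^j_{t+1}\subseteq\mathcal{X}_{t+1}$.
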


\begin{proof}
We first show that the cost to go at each time step $J^*_{t}(x_t)$ is convex for every $t\in\{0,...,N\}$. We show this recursively. Observe that $J^*_{N}(x_N)$ is convex because it is a quadratic cost. Suppose at time $t$, $J^*_{t}(x_t)$ is convex. Since $x_t=Ax_{t-1}+Bu_{t-1}+d_{t-1}$ is an affine map from $(x_{t-1},u_{t-1})$ to $x_t$ for fixed $d_{t-1}$, the function $J^*_{t}(Ax_{t-1}+Bu_{t-1}+d_{t-1})$ is jointly convex in $(x_{t-1},u_{t-1})$ for fixed $d_{t-1}$. It was shown in \cite{CACL} that $E_{d_{t-1}}(J^*_{t}(Ax_{t-1}+Bu_{t-1}+d))$ is a convex function in $(x_{t-1},u_{t-1})$. Let $\mathcal{U}_t(x)=\{u\in\mathcal{U}:Ax+Bu+d\in\mathcal{X}_{t+1} \:\forall d\in\mathcal{D}_t\}$. Next, we show that $\mathcal{U}_t(x)$ is a convex point-to-set map. Let $x_1$ and $x_2$ be two initial states and $u_1\in \mathcal{U}_t(x_1)$ and $u_2\in\mathcal{U}_t(x_2)$. We must show that $\lambda u_1+(1-\lambda)u_2\in\mathcal{U}_t(\lambda x_1+(1-\lambda) x_2)$. For any $d\in\mathcal{D}_t$, we have
\begin{multline*}
A(\lambda x_1+(1-\lambda) x_2)+B(\lambda u_1+(1-\lambda)u_2)+d\\
=\lambda(Ax_1+Bu_1+d)+(1-\lambda)(Ax_2+Bu_2+d).
\end{multline*}
Since $\mathcal{X}_{t+1}$ is convex, the above equation shows that $\lambda x_1+(1-\lambda) x_2\in\mathcal{X}_{t+1}$, which implies $\lambda u_1+(1-\lambda)u_2\in\mathcal{U}_t(\lambda x_1+(1-\lambda) x_2)$. Using the convexity of $E_{d_{t-1}}(J^*_{t}(Ax_{t-1}+Bu_{t-1}+d))$ and $\mathcal{U}_{t-1}(x)$, the authors of \cite{FK86} showed that $J^*_{t-1}(x_{t-1})$ is convex.

Let $(\hat{u}_t^*,\hat{\lambda}_t^*)$ be primal and dual optimizers, respectively, for problem \ref{eq: evpsubset}. Let $(u_t,\lambda_t)$ be primal and dual variables, respectively, for problem \ref{eq:exact-problem}. Set $u_t=\tilde{u}_t^*$. Suppose the $i$th element of $\hat{\lambda}_t$ corresponds to the $i$th row of $R^x$ and the $i$th element of $\lambda_t$ corresponds to the $i$th row of $P^x$. For each $i\in M^j_{t+1}$, let the $i$th element in $\lambda_t$ be set equal to the $j$th element in $\hat{\lambda}_t^*$, where $j$ is such that $P^x_i= R^x_j$. For $i\in\{1,...,q\}\setminus M^j_{t+1}$, we set the $i$th element of $\lambda_t$ equal to $0$. Since $(\hat{u}_t^*,\hat{\lambda}_t^*)$ satisfies the KKT conditions for problem \ref{eq: evpsubset}, it follows that if the constraints indexed by $E^j_{t+1}$ are inactive, then $(u_t,\lambda_t)$ satisfy the KKT conditions for problem \ref{eq:exact-problem}. Since problem \ref{eq:exact-problem} is convex, we conclude that $\tilde{u}_t^*$ is an optimal solution for problem \ref{eq:exact-problem}.
\end{proof}

Using this proposition we can find the region where the certainty equivalence controller is optimal by solving the multi-parametric quadratic program (\ref{eq: cesubset}) and testing the active constraints for the resulting critical regions.  This is summarized in Algorithm \ref{ALG:CEequiv}.  

Algorithm \ref{ALG:CEequiv} maintains a P-collection $\mathbfcal{P}_t$ of critical regions where the certainty equivalence solution is optimal.  For each time $t=N-1,\dots,0$, a multi-parametric quadratic program is used to solve problem (\ref{eq: cesubset}) with a critical region $\mathcal{R}_{t+1}^j \in \mathbfcal{P}_{t+1}$.  This produces an array of critical regions $\mathbfcal{R} = \{\mathcal{R}_1,\dots,\mathcal{R}_r\}$.  For each critical region $\mathcal{R}_i \in \mathbfcal{R}$, Algorithm \ref{ALG:CEequiv} uses the constraint test from Proposition \ref{prop:constraint-test} to determine if certainty equivalence holds inside $\mathcal{R}_i$.  At termination this algorithm returns the P-collection $\mathbfcal{P}_0 \subseteq 2^{\mathcal{X}_0}$ where the certainty equivalence control is the optimal solution to (\ref{eq: evp}).

\begin{algorithm}
\caption{\textsc{Computing Certainty Equivalence State Subset}}
\label{ALG:CEequiv}

\begin{algorithmic}[1]
\vspace{1mm}
\STATE Solve (\ref{eq: cedp}) at time $t=N-1$.  Obtain the initial certainty equivalence P-collection $\mathbfcal{P}_{N-1} = \{\mathcal{R}^1_{N-1},...,\mathcal{R}^r_{N-1}\}$
\FOR{$t = N-1$ to $0$}
\FOR{each $\mathcal{R}^j_t \in \mathbfcal{P}_t$}
\STATE Solve (\ref{eq: cesubset}) at time $t-1$.  Obtain critical region array $\mathbfcal{R} = \{ \mathcal{R}_1,\dots,\mathcal{R}_j \}$
\FOR{each $\mathcal{R}_i \in \mathbfcal{R}$}
\IF{the constraints indexed by $E_t^{j}$ are inactive for region $\mathcal{R}_i$}
\STATE Add critical regions $\mathcal{R}_i$ to certainty equivalence partition $\mathbfcal{P}_{t-1}$
\ENDIF 
\ENDFOR
\ENDFOR
\ENDFOR
\vspace{1mm}
\end{algorithmic}
\end{algorithm}

\section{Certainty Equivalence for Problems with Symmetries}

In this section we present a method for reducing the computational complexity of Algorithm \ref{ALG:CEequiv}.  Our modification exploits patterns in the problem structure called symmetries.  

\subsection{Definition of Symmetry}

In this section we define symmetry for Problem (\ref{eq: evp}) and show how symmetry affects the exact and certainty equivalence controllers.  

A symmetry of Problem (\ref{eq: evp}) is a state-space transformation $\Theta$ and input-space transformation $\Omega$ that preserves the dynamics, constraints, and stage cost.  

\begin{definition}
\label{def:prob-symmetry}
A linear symmetry of Problem (\ref{eq: evp}) is a pair of invertible matrices $(\Theta,\Omega)$ such that for $t = 0,\dots,N$ 
\begin{subequations}
\begin{align}
\Theta A = A \Theta \\
\Theta B = B \Omega
\end{align}
\end{subequations}
\begin{align}
f_t(\Theta x, \Omega u) = f_t(x,u)
\end{align}
\begin{subequations}
\begin{align}
\Theta \mathcal{X}_t = \mathcal{X}_t \\
\Omega \mathcal{U}_t = \mathcal{U}_t
\end{align}
\end{subequations}
and, $\mathbf{p}(d) = \mathbf{p}(\Theta d)$ for all $d_t \in \Theta \mathcal{D}_t = \mathcal{D}_t$ where $\mathbf{p}(d)$ is the probability density function for the disturbance $d_t$
\end{definition}
The set of all symmetries $(\Theta,\Omega)$ that satisfy Definition \ref{def:prob-symmetry} is a group denoted by $\Aut(MPC)$.  In \cite{Danielson2013b} a procedure was presented for identifying the symmetry group $\Aut(MPC)$ where each $f_t$ is quadratic and $\mathcal{X}_t$, $\mathcal{D}_t$, and $\mathcal{U}_t$ are polytopic sets for $t = 0,\dots,N$.  

Symmetries of Problem (\ref{eq: evp}) affect the exact and certainty equivalence controllers.  Proposition \ref{prop:symmetry-exact} shows that symmetries $(\Theta,\Omega) \in \Aut(MPC)$ relate the exact control law $\pi_t^*$ at different points in the state-space.  First we state and prove the following lemma.  

\begin{lemma}
\label{lemma:mpp-symmetry}
Consider the multiparametric program (\ref{eq:mpp}) where the strictly convex cost $J(x,u)$ and feasible region $\mathcal{F}(x)$ satisfy
\begin{align*}
J(\Theta x,\Omega u) &= J(x,u) \\
\Omega \mathcal{F}(\Theta^{-1} x) &= \mathcal{F}(x) \\
\Theta \mathcal{X} &= \mathcal{X}.
\end{align*}
Then the optimal solution satisfies $\pi^*(x) = \Omega \pi^*(\Theta^{-1} x)$ for all $x \in \mathcal{X}$.  
\end{lemma}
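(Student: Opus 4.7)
The plan is to verify that $\Omega\pi^*(\Theta^{-1}x)$ is the unique minimizer of the multiparametric program at parameter $x$, so that the equality $\pi^*(x) = \Omega \pi^*(\Theta^{-1} x)$ follows immediately from uniqueness. This is a standard ``transport'' argument for symmetric convex programs, so I do not expect a serious technical obstacle; the one thing requiring care is bookkeeping the parameter-side maps ($\Theta,\Theta^{-1}$) against the decision-side maps ($\Omega,\Omega^{-1}$) in each invariance identity.

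First I would set $u^* := \pi^*(\Theta^{-1}x)$; this is well defined for any $x \in \mathcal{X}$ because $\Theta\mathcal{X} = \mathcal{X}$ implies $\Theta^{-1}x \in \mathcal{X}$. Feasibility of $\Omega u^*$ at parameter $x$ is then immediate: $u^* \in \mathcal{F}(\Theta^{-1}x)$ by definition of $\pi^*$, and the hypothesis $\Omega\mathcal{F}(\Theta^{-1}x) = \mathcal{F}(x)$ yields $\Omega u^* \in \mathcal{F}(x)$, so $\Omega u^*$ is at least a feasible candidate for the program at parameter $x$.

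Next I would establish optimality by contradiction. Suppose some $\hat u \in \mathcal{F}(x)$ satisfies $J(x,\hat u) < J(x,\Omega u^*)$. Applying the cost-invariance identity $J(\Theta y,\Omega v) = J(y,v)$ twice, once with $(y,v) = (\Theta^{-1}x,u^*)$ and once with $(y,v) = (\Theta^{-1}x,\Omega^{-1}\hat u)$, gives $J(x,\Omega u^*) = J(\Theta^{-1}x,u^*)$ and $J(x,\hat u) = J(\Theta^{-1}x,\Omega^{-1}\hat u)$. The feasibility-invariance hypothesis is equivalent to $\mathcal{F}(\Theta^{-1}x) = \Omega^{-1}\mathcal{F}(x)$, so $\Omega^{-1}\hat u$ lies in $\mathcal{F}(\Theta^{-1}x)$. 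Hence $\Omega^{-1}\hat u$ would achieve a strictly lower cost than $u^*$ at parameter $\Theta^{-1}x$, contradicting the optimality of $u^* = \pi^*(\Theta^{-1}x)$.

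Finally, strict convexity of $J$ in the decision variable ensures the minimizer at each parameter is unique, and this uniqueness is exactly where strict convexity enters the argument. Since we have shown $\Omega u^*$ is both feasible and optimal at parameter $x$, we conclude $\pi^*(x) = \Omega u^* = \Omega\pi^*(\Theta^{-1}x)$, as required. (Without strict convexity the same derivation still yields the set-valued analogue, namely that the minimizer set at $x$ equals the image under $\Omega$ of the minimizer set at $\Theta^{-1}x$.)
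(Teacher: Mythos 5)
Your proof is correct and follows essentially the same route as the paper's: show $\Omega\pi^*(\Theta^{-1}x)$ is feasible at $x$ via $\Omega\mathcal{F}(\Theta^{-1}x)=\mathcal{F}(x)$, show it is optimal by transporting a hypothetically better point back to parameter $\Theta^{-1}x$ for a contradiction, and invoke strict convexity for uniqueness. Your version is slightly more explicit (you verify feasibility of $\Omega^{-1}\hat u$ at $\Theta^{-1}x$, which the paper leaves implicit), but there is no substantive difference.
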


\begin{proof}
First we show that $\Omega \pi^*(\Theta^{-1}x)$ is a feasible solution to the multiparametric program.  Note
\begin{align*}
\Omega \pi^*(\Theta^{-1}x) \in \Omega\mathcal{F}(\Theta^{-1}x) = \mathcal{F}(x)
\end{align*}
where $\Theta^{-1} x \in \Theta^{-1} \mathcal{X} = \mathcal{X}$.  Next we show $\Omega \pi^*(\Theta^{-1} x)$ is an optimal solution to the multiparametric program.  Suppose $\Omega \pi^*(\Theta^{-1} x)$ is suboptimal then
\begin{align*}
J(x,\pi^*(x)) < J(x,\Omega \pi^*( \Theta^{-1} x))
\end{align*}
which implies 
\begin{align*}
J(y,\Omega^{-1} \pi^*(\Theta y))< J(y,\pi^*(y))
\end{align*}
where $x = \Theta y \in \Theta \mathcal{X} = \mathcal{X}$.  However this contradicts the optimality of $\pi^*$ at $y \in \mathcal{X}$.  Thus $\pi^*(x)$ and $\Omega \pi^*(\Theta^{-1}x)$ are both optimal solutions of the multiparametric program (\ref{eq:mpp}).  Since the cost is strictly convex the solution of the multiparametric program (\ref{eq:mpp}) is unique.  Therefore $\pi^*(x)=\Omega \pi^*(\Theta^{-1}x)$ for all $x \in \mathcal{X}$.  
\end{proof}

\begin{proposition}
\label{prop:symmetry-exact}
Let $\pi_t^*$ be the solution to (\ref{eq:exact-problem}).  Then for each $(\Theta,\Omega) \in \Aut(MPC)$ we have $\pi_t^*(\Theta x) = \Omega \pi_t^*(x)$ for all $x \in \mathcal{X}_t$.  
\end{proposition}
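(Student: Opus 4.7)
The plan is backward induction on $t \in \{N, N-1, \dots, 0\}$, strengthened to include the auxiliary invariance $J_t^*(\Theta x) = J_t^*(x)$ for all $x \in \mathcal{X}_t$ and all $(\Theta,\Omega) \in \Aut(MPC)$. This auxiliary statement at time $t+1$ is exactly what is needed to apply Lemma~\ref{lemma:mpp-symmetry} to the multiparametric program (\ref{eq:exact-problem}) at time $t$. The base case $t = N$ is immediate: the terminal cost satisfies $J_N^*(\Theta x) = f_N(\Theta x) = f_N(x)$ by Definition~\ref{def:prob-symmetry}, and there is no controller to transform.

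For the inductive step at time $t$ I would verify the three hypotheses of Lemma~\ref{lemma:mpp-symmetry} for problem (\ref{eq:exact-problem}) read as an mp-QP in the parameter $x_t$. The state-constraint invariance $\Theta \mathcal{X}_t = \mathcal{X}_t$ is given directly. For the robust feasibility set $\mathcal{F}_t(x) = \{u \in \mathcal{U} : Ax + Bu + d \in \mathcal{X}_{t+1}~\forall d \in \mathcal{D}_t\}$, I would take $v \in \mathcal{F}_t(\Theta^{-1} x)$ and use $\Theta A = A \Theta$, $\Theta B = B\Omega$, and $\Theta \mathcal{D}_t = \mathcal{D}_t$ to write
\[
Ax + B\Omega v + d = \Theta\bigl(A \Theta^{-1} x + B v + \Theta^{-1} d\bigr) \in \Theta \mathcal{X}_{t+1} = \mathcal{X}_{t+1};
\]
combined with $\Omega \mathcal{U} = \mathcal{U}$ this gives $\Omega \mathcal{F}_t(\Theta^{-1} x) \subseteq \mathcal{F}_t(x)$, and the reverse inclusion is symmetric.

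The central step is cost invariance $J_t(\Theta x, \Omega u) = J_t(x, u)$. The stage cost is handled by Definition~\ref{def:prob-symmetry} directly. For the expected cost-to-go $E_{d_t}\bigl(J_{t+1}^*(A \Theta x + B \Omega u + d_t)\bigr)$, I would first factor $\Theta$ out of the argument via the commutation relations, then invoke the inductive hypothesis to drop $\Theta$ from inside $J_{t+1}^*$, and finally absorb the residual $\Theta^{-1}$ into the integration measure by the change of variables $d \mapsto \Theta^{-1} d$, using the density invariance $\mathbf{p}(\Theta d) = \mathbf{p}(d)$ together with $\Theta \mathcal{D}_t = \mathcal{D}_t$. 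Strict convexity of $J_t(x, \cdot)$ is inherited from $R_t \succ 0$ and the convexity of the expected cost-to-go established in the proof of Proposition~\ref{prop:constraint-test}, so Lemma~\ref{lemma:mpp-symmetry} applies and yields $\pi_t^*(\Theta x) = \Omega \pi_t^*(x)$. The companion invariance $J_t^*(\Theta x) = J_t^*(x)$ then follows by substituting this optimizer back into the already-invariant $J_t$.

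The main obstacle is the expected-cost-to-go computation in step three: it is the only place where the probabilistic structure of the problem enters, and the change-of-variables argument requires both $\Theta \mathcal{D}_t = \mathcal{D}_t$ and $\mathbf{p}(\Theta d) = \mathbf{p}(d)$ to combine so that the expectation is literally unchanged rather than merely equivalent in distribution on a larger set. Everything else is algebraic bookkeeping with the commutation relations from Definition~\ref{def:prob-symmetry}, and the real work is packaged inside Lemma~\ref{lemma:mpp-symmetry}.
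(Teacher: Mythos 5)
Your proposal is correct and follows essentially the same route as the paper: both verify the hypotheses of Lemma~\ref{lemma:mpp-symmetry} by showing the robust feasible set satisfies $\mathcal{F}_t(\Theta x)=\Omega\mathcal{F}_t(x)$ via the commutation relations, and establish cost invariance by induction with the change of variables $d\mapsto\Theta^{-1}d$ in the expectation. The only (immaterial) difference is that you carry the invariance of the value function $J_{t+1}^*$ directly as the induction hypothesis, whereas the paper carries the symmetry of $J_{t+1}$ and $\pi_{t+1}^*$ separately and combines them inside the integral.
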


\begin{proof}
For each time $t$ the cost function $J_t(x,u)$ in (\ref{eq:exact-problem}) is strictly convex.  Therefore using Lemma \ref{lemma:mpp-symmetry} we can show $\pi_t^*(\Theta x) = \Omega \pi_t^*(x)$ for all $x \in \mathcal{X}_t$ if the cost to go $J_t(x,u)$ and the feasible region $\mathcal{F}_t(x) = \{ u : u \in \mathcal{U}, Ax + Bu + d \in \mathcal{X}_{t+1}, \forall d \in \mathcal{D} \}$ are symmetric.  

First we prove the feasible region $\mathcal{F}(x)$ of (\ref{eq:exact-problem}) is symmetric.  From Definition \ref{def:prob-symmetry} we have
\begin{align*}
\mathcal{F}_t(\Theta x) 
&= \{ u : u \in \mathcal{U}, A\Theta x + Bu + d \in \mathcal{X}_t, \forall d \in \mathcal{D} \} \\
&= \{ \Omega u' : u' \in \Omega \mathcal{U}, Ax + Bu' + d' \in \mathcal{X}_{t+1},\\
&\quad \forall d' \in \Theta \mathcal{D} \} \\
&= \Omega \mathcal{F}_t(x)
\end{align*}
where $u' = \Omega^{-1} u$ and $d' = \Theta^{-1} d$.  

Next we prove by induction that the cost $J_t(x,u)$ is symmetric.  This holds for $t=N$ by Definition \ref{def:prob-symmetry}.  For $t < N$ assume $J_{t+1}(x,u)$ and $\pi_{t+1}^*(x)$ are symmetric.  By Definition \ref{def:prob-symmetry} we have $f_t(x,u) = f_t(\Theta x,\Omega u)$ for each $(\Theta,\Omega) \in \Aut(MPC)$.  We need to show this holds for the second term $E(J^*_{t+1}(Ax+Bu+d))$ in the cost function (\ref{eq: dprecurse}).  By Definition  \ref{def:prob-symmetry} and the induction hypothesis
\footnotesize
\begin{align*}
&E(J^*_{t+1}(A\Theta x+B\Omega u+d)) \\
&= \int_{d\in \mathcal{D}_t} J_{t+1}\big( A\Theta x+B\Omega u+d, \pi^*_{t+1}(A\Theta x+B\Omega u+d) \big) d\mathbf{p}(d) \\
&= \int_{\Theta d' \in \mathcal{D}_t} J_{t+1}\big( \Theta( A x+B u+d'), \Omega \pi^*_{t+1}(Ax+Bu+d') \big) d\mathbf{p}(\Theta d') \\
&= \int_{d' \in \mathcal{D}_t} J_{t+1}\big( A x+B u+d', \pi^*_{t+1}(Ax+Bu+d') \big) d\mathbf{p}(d') \\
&=E(J^*_{t+1}(A x+B u+d)) 
\end{align*} 
\normalsize
where $d' = \Theta^{-1} d$.  Therefore by Lemma \ref{lemma:mpp-symmetry} we conclude $\pi_t^*(\Theta x) = \Omega \pi_t^*(x)$ for all $x \in \mathcal{X}_t$.  
\end{proof}

This proposition says that the feedback control law $\pi_t^*$ at points $x$ and $y = \Theta x$ is related by a linear transformation $\Omega$.  

For the certainty equivalence controller $\tilde{\pi}_t^*$ we have a strong result: in addition to relating the control law $\tilde{\pi}_t^*(x)$ at different points in the state-space, symmetries permute the critical regions $\mathbfcal{R}_t$ of the controller.  

\begin{proposition}
\label{prop:symmetry-certain}
Let $\tilde{\pi}_t^*$ be the solution to (\ref{eq: cedp}).  Then for each $(\Theta,\Omega) \in \Aut(MPC)$ we have $\tilde{\pi}_t^*(\Theta x) = \Omega \tilde{\pi}_t^*(x)$ for all $x \in \mathcal{X}_t$.  Furthermore for any critical region $\mathcal{R}^i_t \in \mathbfcal{R}_t$ there exists $\mathcal{R}^j_t \in \mathbfcal{R}_t$ such that $\mathcal{R}^j_t = \Theta\mathcal{R}^i_t$.  
\end{proposition}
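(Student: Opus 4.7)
The plan is to mirror the structure of the proof of Proposition \ref{prop:symmetry-exact}, using backward induction on $t$ and invoking Lemma \ref{lemma:mpp-symmetry} applied to the certainty equivalence multiparametric problem (\ref{eq: cedp}). The feasible region of (\ref{eq: cedp}) is identical to that of (\ref{eq:exact-problem}), so the argument that $\mathcal{F}_t(\Theta x) = \Omega \mathcal{F}_t(x)$ carries over verbatim from the previous proposition. What requires a new argument is the symmetry of the certainty equivalence cost $\tilde{J}_t(x,u) = f_t(x,u) + \tilde{J}^*_{t+1}(Ax+Bu+E(d_t))$.

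The base case $t = N$ is immediate from Definition \ref{def:prob-symmetry}. For the inductive step, the key observation I would establish first is that $\Theta E(d_t) = E(d_t)$. This follows from $\mathbf{p}(d) = \mathbf{p}(\Theta d)$ together with $\Theta \mathcal{D}_t = \mathcal{D}_t$: normalization of the density forces $|\det \Theta| = 1$, and then a change of variables in $E(d_t) = \int d\, \mathbf{p}(d)\, dd$ yields $E(d_t) = \Theta E(d_t)$. Combined with $\Theta A = A\Theta$ and $\Theta B = B\Omega$ from Definition \ref{def:prob-symmetry}, this gives $A\Theta x + B\Omega u + E(d_t) = \Theta\bigl(Ax + Bu + E(d_t)\bigr)$, so the inductive hypothesis that $\tilde{J}^*_{t+1}$ is $\Theta$-invariant implies symmetry of the second term of $\tilde{J}_t$. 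Since $f_t$ is symmetric by assumption, $\tilde{J}_t(\Theta x, \Omega u) = \tilde{J}_t(x,u)$, and Lemma \ref{lemma:mpp-symmetry} applies (strict convexity of $\tilde{J}_t$ in $u$ is inherited from the quadratic stage cost and the piecewise quadratic convex structure of $\tilde{J}^*_{t+1}$ guaranteed by Theorem \ref{thm: mpQP}), giving $\tilde{\pi}_t^*(\Theta x) = \Omega \tilde{\pi}_t^*(x)$. Evaluating $\tilde{J}_t^*(\Theta x) = \tilde{J}_t(\Theta x, \tilde{\pi}_t^*(\Theta x)) = \tilde{J}_t(x, \tilde{\pi}_t^*(x)) = \tilde{J}_t^*(x)$ closes the induction.

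For the claim that symmetries permute critical regions, I would use the piecewise-affine structure from Theorem \ref{thm: mpQP}. Fix $\mathcal{R}_t^i \in \mathbfcal{R}_t$ on which $\tilde{\pi}_t^*(x) = F_i x + G_i$. For any $y \in \Theta \mathcal{R}_t^i$, the relation just established gives
\begin{equation*}
\tilde{\pi}_t^*(y) = \Omega \tilde{\pi}_t^*(\Theta^{-1} y) = (\Omega F_i \Theta^{-1}) y + \Omega G_i,
\end{equation*}
so $\tilde{\pi}_t^*$ is a single affine function on the polyhedron $\Theta \mathcal{R}_t^i$. Since the critical regions are the maximal full-dimensional polyhedra on which $\tilde{\pi}_t^*$ admits a single affine representation, $\Theta \mathcal{R}_t^i$ must coincide with some $\mathcal{R}_t^j \in \mathbfcal{R}_t$. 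Applying the same argument to $\Theta^{-1}$ shows the correspondence $i \mapsto j$ is a bijection on $\mathbfcal{R}_t$.

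The main obstacle I anticipate is the last step: a priori $\Theta \mathcal{R}_t^i$ might overlap several critical regions whose affine pieces happen to agree on $\Theta \mathcal{R}_t^i$. Resolving this cleanly requires appealing to the standard non-degeneracy assumption on the mpQP (or equivalently to the maximality convention in the definition of the critical region partition) so that distinct cells in $\mathbfcal{R}_t$ have distinct affine representations; with that convention $\Theta \mathcal{R}_t^i$ is forced to be exactly one cell rather than a strict union.
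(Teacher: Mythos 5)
Your proof is correct, but it takes a genuinely different route from the paper for the simple reason that the paper gives no argument at all: its proof of Proposition \ref{prop:symmetry-certain} is a bare citation to \cite{Danielson2014}. Your self-contained argument correctly isolates the one ingredient that distinguishes the certainty-equivalence case from Proposition \ref{prop:symmetry-exact}: instead of the change-of-variables computation inside the expectation integral, you need the single identity $\Theta E(d_t) = E(d_t)$, and your derivation of it (normalization of $\mathbf{p}$ forces $|\det\Theta|=1$, then change of variables in $\int d\,\mathbf{p}(d)\,dd$) is sound. The backward induction on the invariance of $\tilde{J}^*_{t+1}$, the reuse of the feasible-region symmetry from Proposition \ref{prop:symmetry-exact}, and the appeal to Lemma \ref{lemma:mpp-symmetry} with strict convexity in $u$ coming from $R_t \succ 0$ all check out. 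What this buys over the paper's approach is transparency: the reader sees exactly where each hypothesis of Definition \ref{def:prob-symmetry} is used, and in particular that the density-invariance condition $\mathbf{p}(d)=\mathbf{p}(\Theta d)$ is needed even for the certainty-equivalence controller (through $\Theta E(d_t)=E(d_t)$), not only for the exact one. The only soft spot is the one you flag yourself: concluding that $\Theta\mathcal{R}^i_t$ is exactly one cell of $\mathbfcal{R}_t$ from the affine structure alone requires a non-degeneracy or maximality convention, since Theorem \ref{thm: mpQP} defines critical regions via active constraint sets and adjacent cells can share an affine law. A cleaner closing step, and likely the one used in \cite{Danielson2014}, is to observe that the symmetry induces a permutation of the rows of the normalized constraint description, hence maps the active set at $(\Theta^{-1}x, \tilde{\pi}^*_t(\Theta^{-1}x))$ to the active set at $(x, \tilde{\pi}^*_t(x))$, so critical regions map to critical regions by definition without any degeneracy caveat.
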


\begin{proof}
See \cite{Danielson2014}.  
\end{proof}

We say two critical regions $\mathcal{R}_i,\mathcal{R}_j \in \mathbfcal{R}$ are equivalent if there exists a state-space transformation $\Theta \in \mathcal{G} = \Aut(MPC)$ such that $\mathcal{R}_j = \Theta \mathcal{R}_i$.  The set of all critical regions equivalent to region $\mathcal{R}_i$ is called an orbit
\begin{align}
\mathcal{GR}_i = \{ \Theta \mathcal{R}_i : \Theta \in \mathcal{G} \} \subseteq \mathbfcal{R}.  
\end{align} 
The P-collection $\mathcal{GR}_i$ is the set of critical regions $\mathcal{R}_j = \Theta \mathcal{R}_i \in \mathbfcal{R}$ that are equivalent to critical region $\mathcal{R}_i$ under the state-space transformations $\Theta \in \mathcal{G} = \Aut(MPC)$.  

The set of critical region orbits is denoted by $\mathbfcal{R} / \mathcal{G} = \{ \mathcal{GR}_{1}, \dots, \mathcal{GR}_{r} \}$, read as $\mathbfcal{R}$ modulo $\mathcal{G}$, where $\{ \mathcal{R}_{1}, \dots, \mathcal{R}_{r}\}$ is a set that contains one representative critical region $\mathcal{R}_{j}$ from each orbit $\mathcal{GR}_{j}$.  With abuse of notation we will equate the set of critical region orbits $\mathbfcal{R} / \mathcal{G}$ with sets of representative critical regions $\mathbfcal{R} / \mathcal{G} = \{ \mathcal{R}_{1}, \dots, \mathcal{R}_{r}\}$.  

\subsection{Symmetric Certainty Equivalence Algorithm}

In this section we use symmetry to reduce the computational complexity of Algorithm \ref{ALG:CEequiv}.  

The following theorem shows that if certainty equivalence holds on a critical region $\mathcal{R}_i \in \mathbfcal{R}$ then it holds on the orbit $\mathcal{GR}_i \subseteq \mathbfcal{R}$ of that region $\mathcal{R}_i$.   

\begin{theorem}
If certainty equivalence holds on a critical region $\mathcal{R}_i \in \mathbfcal{R}$ then it holds for each critical region $\mathcal{R}_j = \Theta \mathcal{R}_i \in \mathcal{GR}_i$ in the orbit $\mathcal{GR}_i$.  
\end{theorem}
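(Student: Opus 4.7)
The plan is to leverage Propositions \ref{prop:symmetry-exact} and \ref{prop:symmetry-certain} to transfer the certainty equivalence property along the orbit $\mathcal{GR}_i$. Recall that ``certainty equivalence holds on $\mathcal{R}_i$'' means two things: (i) the optimizers coincide, $\pi_t^*(x) = \tilde{\pi}_t^*(x)$ for all $x \in \mathcal{R}_i$, and (ii) the value-function gap $J_t^*(x) - \tilde{J}_t^*(x)$ is constant on $\mathcal{R}_i$. I would prove each of (i) and (ii) separately for an arbitrary $y = \Theta x \in \mathcal{R}_j$, with $x \in \mathcal{R}_i$ and $(\Theta,\Omega) \in \Aut(\text{MPC})$.

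For (i), I would simply chain the two symmetry propositions: $\pi_t^*(y) = \pi_t^*(\Theta x) = \Omega \pi_t^*(x) = \Omega \tilde{\pi}_t^*(x) = \tilde{\pi}_t^*(\Theta x) = \tilde{\pi}_t^*(y)$, where the middle equality uses the hypothesis on $\mathcal{R}_i$ and the outer equalities are Propositions \ref{prop:symmetry-exact} and \ref{prop:symmetry-certain} respectively. This is the straightforward part.

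For (ii), I would show that both value functions are themselves invariant along the orbit, i.e.\ $J_t^*(\Theta x) = J_t^*(x)$ and $\tilde{J}_t^*(\Theta x) = \tilde{J}_t^*(x)$, so the gap on $\mathcal{R}_j$ is exactly the (constant) gap on $\mathcal{R}_i$. The invariance of $J_t^*$ follows by plugging the transformed optimizer into the symmetric cost established in the proof of Proposition \ref{prop:symmetry-exact}: $J_t^*(\Theta x) = J_t(\Theta x, \pi_t^*(\Theta x)) = J_t(\Theta x, \Omega \pi_t^*(x)) = J_t(x, \pi_t^*(x)) = J_t^*(x)$. For $\tilde{J}_t^*$ the same argument works, provided one checks that the certainty-equivalent recursion is also symmetric; this reduces to verifying $\Theta E(d_t) = E(d_t)$, which follows from the invariance $\mathbf{p}(d) = \mathbf{p}(\Theta d)$ of the disturbance distribution in Definition \ref{def:prob-symmetry} by a change of variables in the expectation.

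The main technical point to be careful about is the induction backwards in time used to propagate symmetry of $\tilde{J}_t^*$, mirroring the induction already carried out for $J_t^*$ in Proposition \ref{prop:symmetry-exact}; this is largely bookkeeping rather than a substantive obstacle. Once both value functions are orbit-invariant, the constancy of $J_t^* - \tilde{J}_t^*$ transfers from $\mathcal{R}_i$ to $\mathcal{R}_j = \Theta \mathcal{R}_i$ immediately, completing the argument for every $\mathcal{R}_j \in \mathcal{GR}_i$.
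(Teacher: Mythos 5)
Your proof of the optimizer identity is exactly the paper's argument: chain Proposition \ref{prop:symmetry-exact} and Proposition \ref{prop:symmetry-certain} around the hypothesis $\pi_t^*=\tilde{\pi}_t^*$ on $\mathcal{R}_i$ to get the same identity on $\Theta\mathcal{R}_i$. The paper's proof stops there. Where you genuinely go further is part (ii): since the paper's working definition of ``certainty equivalence holds on $\mathcal{R}_i$'' (stated at the start of the section on determining where certainty equivalence is optimal) also requires that $\tilde{J}_t^*(x)-J_t^*(x)$ be constant on the region, a complete proof of the theorem should verify that this second condition transfers along the orbit as well -- and the paper's own proof silently omits this. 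Your route for that piece is sound: orbit-invariance of $J_t^*$ falls out of the cost and feasible-set symmetry already established inside the proof of Proposition \ref{prop:symmetry-exact}, and orbit-invariance of $\tilde{J}_t^*$ needs only the extra observation that $\Theta E(d_t)=E(d_t)$, which follows from $\mathbf{p}(d)=\mathbf{p}(\Theta d)$ together with $\Theta\mathcal{D}_t=\mathcal{D}_t$ (note this also forces $|\det\Theta|=1$, which is what makes the change of variables in the expectation clean). The backward induction you flag for propagating symmetry of the certainty-equivalent recursion is indeed just bookkeeping mirroring the induction in Proposition \ref{prop:symmetry-exact}. In short: your proposal is correct, subsumes the paper's proof as its part (i), and is actually the more complete argument relative to the definition the theorem is implicitly invoking.
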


\begin{proof}
By definition of certainty equivalence on $\mathcal{R}$ we have 
\begin{align}
\pi_t^*(x) = \tilde{\pi}_t^*(x)
\end{align}
for all $x \in \mathcal{R}_i$.  By Propositions \ref{prop:symmetry-exact} and \ref{prop:symmetry-certain} we have $\Omega \pi_t^*(x) = \pi_t^*(\Theta x)$ and $\Omega \tilde{\pi}_t^*(x) = \tilde{\pi}_t^*(\Theta x)$.  Thus 
\begin{align}
\tilde{\pi}_t^*(\Theta^{-1} x) = \Omega^{-1} \tilde{\pi}_t^*(x) =  \Omega^{-1} \pi_t^*(x) =  \pi_t^*(\Theta^{-1} x)
\end{align}
for all $\Theta x \in \mathcal{R}_i$.  In other words $\tilde{\pi}_t^*(x) = \pi_t^*(x)$ for all $x \in \Theta \mathcal{R}_i$.  
\end{proof}

This theorem can be used to reduce the number of multi-parametric quadratic programs solved in Algorithm \ref{ALG:CEequiv}.  Algorithm \ref{algo:symmetry-certainty} is a modification of Algorithm \ref{ALG:CEequiv} that only tests one representative from each orbit for certainty equivalence.  

\begin{algorithm}
\caption{Compute Certainty Equivalence Region $\mathbfcal{P}_0 \subseteq \mathcal{X}_0$}
\label{algo:symmetry-certainty}
\begin{algorithmic}[1]
\vspace{1mm}
\STATE Solve (\ref{eq: cedp}) at time $t=N-1$.  Obtain P-collection $\mathbfcal{R}_{N-1}$ of critical regions.  Certain equivalence holds on $\mathbfcal{P}_{N-1} = \mathbfcal{R}_{N-1}$ for each set in this array by Prop \ref{prop: 1stepquad}.
\STATE Construct P-collection $\mathbfcal{P}_{N-1} / \mathcal{G}$ that contains one representative region $\mathcal{R}^i_{N-1}$ from each orbit $\mathcal{G} \mathcal{R}^i_{N-1}$ for $\mathcal{R}^i_{N-1} \in \mathbfcal{R}_{N-1}$.
\FOR{$t = N-1$ to $0$}
\FOR{each $\mathcal{R}^i_t \in \mathbfcal{P}_t / \mathcal{G}$}
\STATE Solve (\ref{eq: evpsubset}) at time $t-1$ with $\mathcal{R}^i_t$.  Obtain an array of critical regions $\mathbfcal{R} = \{ \mathcal{R}_1,\dots,\mathcal{R}_r \}$
\WHILE{critical region array $\mathbfcal{R}$ is not empty}
\STATE Select $\mathcal{R}_j \in \mathbfcal{R}$
\STATE Remove orbit $\mathcal{G}\mathcal{R}_j$ of $\mathcal{R}_j$ from $\mathcal{R}$
\IF{the constraints indexed by $E^i_t$ are inactive for region $\mathcal{R}_j$}
\STATE Add $\mathcal{R}_j$ to $\mathbfcal{P}_{t-1} / \mathcal{G}$
\ENDIF
\ENDWHILE
\ENDFOR
\ENDFOR
\STATE Construct full certainty equivalence P-collection $\mathbfcal{P}_0$ by calculating the orbit $\mathcal{G}\mathcal{R}^i_0$ of each element $\mathcal{R}^i_0$ of $\mathbfcal{P}_0 / \mathcal{G}$.
\vspace{1mm}
\end{algorithmic}
\end{algorithm}

Algorithm \ref{algo:symmetry-certainty} maintains a P-collection $\mathbfcal{P}_t / \mathcal{G}$ of representative regions $\mathcal{R}^i_t$ for each orbit $\mathcal{GR}^i_t$ where certainty equivalence holds.   This array is initialize by solving  the terminal multi-parametric quadratic program (\ref{eq: cedp}) to obtain the initial certainty equivalence P-collection $\mathbfcal{P}_{N-1} = \mathbfcal{R}$.   The representative certainty equivalence P-collection $\mathbfcal{P}_{N-1} / \mathcal{G}$ is constructed storing a single representative $\mathcal{R}^i_t \in \mathbfcal{P}_{N-1}$ from each of the orbits $\mathcal{GR}^i_t \subseteq \mathbfcal{P}_{N-1}$.

In the dynamic programming loop Algorithm \ref{algo:symmetry-certainty} solves multiparametric program (\ref{eq: evpsubset}) for each representative region $\mathcal{R}^i_t \in \mathbfcal{P}_t / \mathcal{G}$ of the certainty equivalence P-collection $\mathbfcal{P}_t$.  This produces an array $\mathbfcal{R}$ of critical regions.  For each orbit of critical regions $\mathcal{GR}_j \subseteq \mathbfcal{R}$, Algorithm \ref{algo:symmetry-certainty} test one representative $\mathcal{R}_j$ for certainty equivalence.  If certainty equivalence holds then the region is added to the representative array $\mathbfcal{P}_{t-1} / \mathcal{G}$.  Thus Algorithm \ref{algo:symmetry-certainty} only adds a single representative $\mathcal{R}_j$ from each critical region orbit $\mathcal{GR}_j$.  

Finally Algorithm \ref{algo:symmetry-certainty} uses symmetry to reconstruct the full certain equivalence P-collection $\mathbfcal{P}_0$ from the P-collection of representative regions $\mathbfcal{P}_0 / \mathcal{G}$.  

Algorithm \ref{algo:symmetry-certainty} requires solving $\sum_{t=0}^N | \mathbfcal{P}_t / \mathcal{G} |$ multiparametric quadratic programs verses the $\sum_{t=0}^N | \mathbfcal{P}_t |$ multiparametric programs solved in Algorithm \ref{ALG:CEequiv}.  Algorithm \ref{algo:symmetry-certainty} includes the additional task of calculating the orbit $\mathcal{GR}$ of polytopes $\mathcal{R}$.  However this can be accomplished efficiently using Algorithm \ref{algo:orbit}.  

\begin{algorithm}
\caption{Orbit $\mathcal{GR}_i$ of region $\mathcal{R}_i$ in P-collection $\mathbfcal{R}$}
\label{algo:orbit}
\begin{algorithmic}[1]
\vspace{1mm}
\STATE Find point $x \in \mathrm{int}(\mathcal{R}_i)$
\STATE Calculate orbit $\mathcal{G}x$ of $x$ under $\mathcal{G}$
\IF{$y \in \mathrm{int}(\mathcal{R}_j)$ for $\mathcal{R}_j \in \mathbfcal{R}$ and $y \in \mathcal{G}x$}
\STATE Add $\mathcal{R}_j$ to $\mathcal{GR}_i$
\ENDIF
\end{algorithmic}
\end{algorithm}

\section{Disturbances with unbounded support}
\label{sec: unbnd}
The above discussion assumes that $\mathcal{D}_t$ is a compact set at all times $t$. However, many commonly used probability distributions, such as Gaussian, have unbounded support. Since the constraints can't be satisfied with absolute certainty, problem (\ref{eq: evp}) can be reformulated with probabilistic constraints as follows. 
\begin{equation}
\begin{aligned}
\min_{\pi_0,...,\pi_{N-1}}&E\left(\sum_{t=0}^{N-1}f_t(x_t,u_t)+f_N(x_N,u_N)\right)\\
\text{subject to }& P(x_t\in\mathcal{X}_t,u_t\in\mathcal{U})\ge 1-\eps\: \forall t\ge0
\label{eq: evprob}
\end{aligned}
\end{equation}
where $u_t=\pi_t(x_t)$, $\pi_t: \mathbb{R}^n\rightarrow \mathbb{R}^p$ is a mapping from the system state $x_t\in\mathbb{R}^n$ to the input space $u_t\in \mathbb{R}^p$ for $t=0,\dots,N-1$, and $0<\epsilon<1$.  

To apply the method discussed in section \ref{sec: pert}, we must have compact disturbance sets. The idea is to find a polytopic subset of the disturbance set $\tilde{\mathcal{D}}_t\subset \mathcal{D}_t$ such that $E(d_t|d_t\in\tilde{\mathcal{D}}_t)=E(d_t)$ and $P(d_t\in\tilde{\mathcal{D}}_t)=(1-\eps)^{\frac{1}{N}}$ and instead solve the robust problem
\begin{equation}
\begin{aligned}
\min_{\pi_0,...,\pi_{N-1}}&E\left(\sum_{t=0}^{N-1}f_t(x_t,u_t)+f_N(x_N,u_N)\right)\\
\text{subject to }& x_t\in\mathcal{X}_t,\:\forall d_t\in\tilde{\mathcal{D}}_t\: \forall t\ge0
\label{eq: evpconserv}
\end{aligned}
\end{equation}
where $u_t=\pi_t(x_t)$, $\pi_t: \mathcal{X}_t \rightarrow \mathcal{U}$ is a mapping from the system state $x_t \in\mathbb{R}^n$ to the input space $u_t \in \mathcal{U}$ for $t =0,\dots,N-1$. Note that this will result in a conservative solution of the probabilistic constraint problem. We make the following assumption about $d_t$.

\begin{assumption}
\label{assump: symm}
$d_t$ has a probability density function $\mathbf{p}(x)$ such that $\mathbf{p}(E(x)+x)=\mathbf{p}(E(x)-x)$. 
\end{assumption}

Assumption \ref{assump: symm} says that the probability density function is symmetric about the mean.  Under this assumption, it is straightforward to show that if $\tilde{\mathcal{D}}_t-E(d_t)\subset\mathbb{R}^n$ is a symmetric Borel set, then $E(d_t|d_t\in\tilde{\mathcal{D}}_t)=E(d_t)$.

The idea is to guarantee with probability $P$ the exactness of certainty equivalence by constructing Borel sets $\tilde{\mathcal{D}}_t$ for each time step $t$ satisfying the following two assumptions.

\begin{assumption}
\label{assump: borel}$\null$
\begin{enumerate}
\item $\tilde{\mathcal{D}}_t-E(d_t)\subset\mathbb{R}^n$ is symmetric.
\item $\mathbf{P}(d_t\in\tilde{\mathcal{D}}_t)=P^{\frac{1}{N}}$.
\end{enumerate}
\end{assumption}

For certain distributions, such as 1-D Gaussian, the sets $\tilde{\mathcal{D}}_t$ satisfying Assumptions \ref{assump: borel} are easily computed. For distributions where the computation of the set is not straightforward, generalized versions of the Chebyshev inequality can be employed. Olkin and Pratt \cite{OlkinPratt} provides the following bound for a random vector $(X_1,...,X_n)\in\mathbb{R}^n$. 
\begin{multline}
\label{eq: cheb}
P\left(\bigcap_{i=1}^n \frac{|X_i-\mu_i|}{\sigma_i}\le k_i\right)\\
\ge 1-\frac{\left(\sqrt{u}+\sqrt{n-1}\sqrt{n\sigma\frac{1}{k_i^2}-u}\right)^2}{n^2},
\end{multline}
where 
\[u=\sum_{i=1}^n \frac{1}{k_i^2}+2\sum_{i=1}^n\sum_{i<j}\frac{\rho_{ij}}{k_ik_j},\]
$\mu_i$ is the $i$th mean, $\sigma_i$ is the $i$th standard deviation, and $\rho_{ij}$ is the correlation between $X_i$ and $X_j$. Observe that the set $\bigcap_{i=1}^n \frac{|X_i-\mu_i|}{\sigma_i}\le k_i$ is a hypercube which is symmetric about the mean. Therefore, to satisfy assumption \ref{assump: borel}, we just need to solve for the $k_i$'s such that
\[1-\frac{\left(\sqrt{u}+\sqrt{n-1}\sqrt{n\sigma\frac{1}{k_i^2}-u}\right)^2}{n^2}\ge P^{\frac{1}{N}}\]

\section{Implementation}
The algorithm above returns a subset of the state space for which certainty equivalence is exact, which is the underlying set of the P-collection $\mathbfcal{P}_0$. In addition, the algorithm can also keep track of the optimal affine controllers in each critical region. The controller can then be implemented directly in a receding horizon controller.

The other alternative is to store only the P-collection of critical regions. Whenever the measured state $x_0\in\mathbfcal{P}_0$, one can solve the following problem to retrieve the optimal control.

\begin{equation}
\begin{aligned}
\min& f_N(\bar{x}_N,\bar{u}_N) + \sum_{t=0}^{N}f_t(\bar{x}_t,\bar{u}_t)\\
\text{subject to }& x_t\in\mathcal{X}_t\\
& u_t=K_tx_t+c_t,\:\forall d_t\in\mathcal{D}_t\: \forall t\ge0,
\label{eq: evpce}
\end{aligned}
\end{equation}

where $\bar{x}_{t+1}=A\bar{x}_t+Bu_t+E(d_t)$,$\bar{x}_0=x_0$, and $\bar{u}_t=K_t\bar{x}_t+c_t$. The authors in \cite{goulart} have detailed a method to solve the above problem using affine disturbance feedback, which transforms the problem into a tractable convex problem.

In the case that $x_0$ is not in $\mathbfcal{P}_0$, the common solution is to continue using the affine disturbance feedback controller. The authors in \cite{HGK} and \cite{VGM} detail the implementation of affine controllers in expected value problems and also methods to compute the suboptimality of such controllers. 
\begin{remark}
The entire methodology above can be trivially derived for linear objective cost. The only difference is that the piecewise quadratic cost are replaced by piecewise linear cost.
\end{remark}

\section{Numerical Examples}

In this section we present three numerical examples that demonstrate our methodology.

\subsection{Integrator System}

For our first example, consider a 2-D  discrete integrator system described by
\[x_{t+1}=x_{t}+u_t+d_t\]
where $x_t,u_t,d_t \in \mathbb{R}^2$.  Suppose for a horizon $N=3$ we would like to solve the problem \ref{eq: evprob} with cost
\[f_t(x_t,u_t)=x_t^Tx_t+u_t^Tu_t\text{ and }f_N(x_N)=x_N^Tx_N\]
and constraints $x_t\in[-10,10]$ and $u_t\in[-1,1]$ where $d_t\in[-0.5,0.5]$.

Using the method described section \ref{sec: pert}, we compute the set of states at each time step for which certainty equivalence is exact. At time step $0$, the set of states for which the certainty equivalence approximation is exact is plotted below in Figure \ref{fig: 2dce}

\begin{figure}[htbp]
\centering
\includegraphics[width=\linewidth]{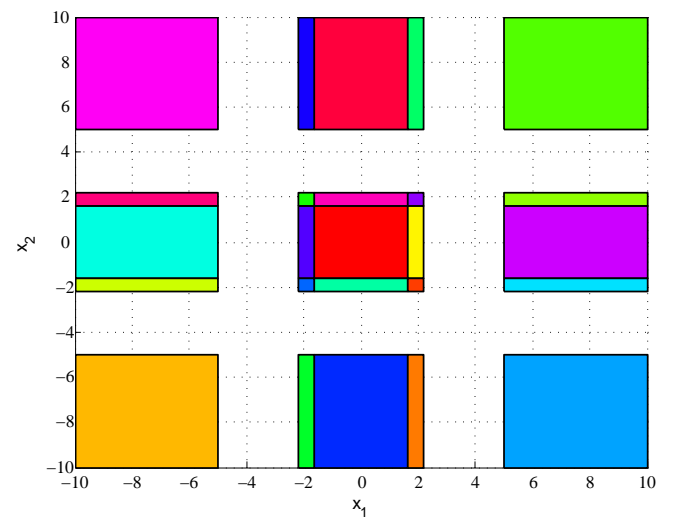}
\caption{Set $\mathbfcal{P}_0$ of initial states $x_0 \in \mathcal{X}_0$ for which the certainty equivalence controller $\tilde{\pi}_0^*$ is optimal $\pi_0^*(x_0) = \tilde{\pi}_0^*(x_0)$.}
\label{fig: 2dce}
\end{figure}
\subsubsection{Exploiting system symmetry}
The 2-D discrete integrator system presented above has symmetries which can be exploited to reduce computation time and memory usage. Since the matrix $A$, $B$, $Q$, and $R$ are identity, the symmetry group is determined by the constraints sets which are squares.  The symmetry group is the dihedral-4 group which consists of the four rotations by $90$ degrees and reflections about the horizontal, vertical, and both diagonal axis.  Using algorithm \ref{algo:symmetry-certainty}, we compute the following representative regions where certainty equivalence is exact.
\begin{figure}[htbp]
\centering
\includegraphics[width=\linewidth]{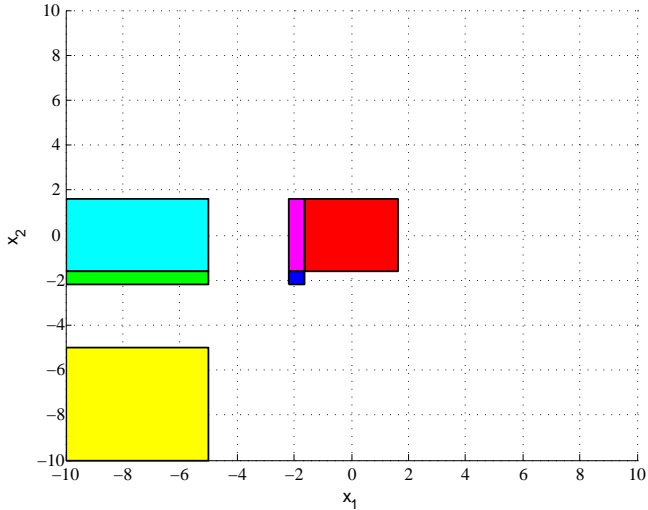}
\caption{Set of representative $\mathbfcal{P}_0/\mathcal{G}$ initial states $x_0 \in \mathcal{X}_0$ for which the certainty equivalence controller $\tilde{\pi}_0^*$ is optimal $\pi_0^*(x_0) = \tilde{\pi}_0^*(x_0)$.  The colored regions represent $\mathbfcal{P}_0/\mathcal{G}$ and gray regions represent $\mathbfcal{P}_0$.}
\label{fig: 2dcesym}
\end{figure}
To obtain the full set of states, we simply compute the orbit of each representative region. Note the decreased memory cost when storing just the representative regions. The biggest benefit, however, is the decreased computation time of solving 7 mpQP's (7.83 seconds) instead of 19 mpQP's (14.76 seconds).


\subsection{Network Battery System}

In this example we consider a network of $n$ batteries connected in a ring as shown below.
\begin{figure}[htbp]
\centering
\includegraphics[scale=0.7]{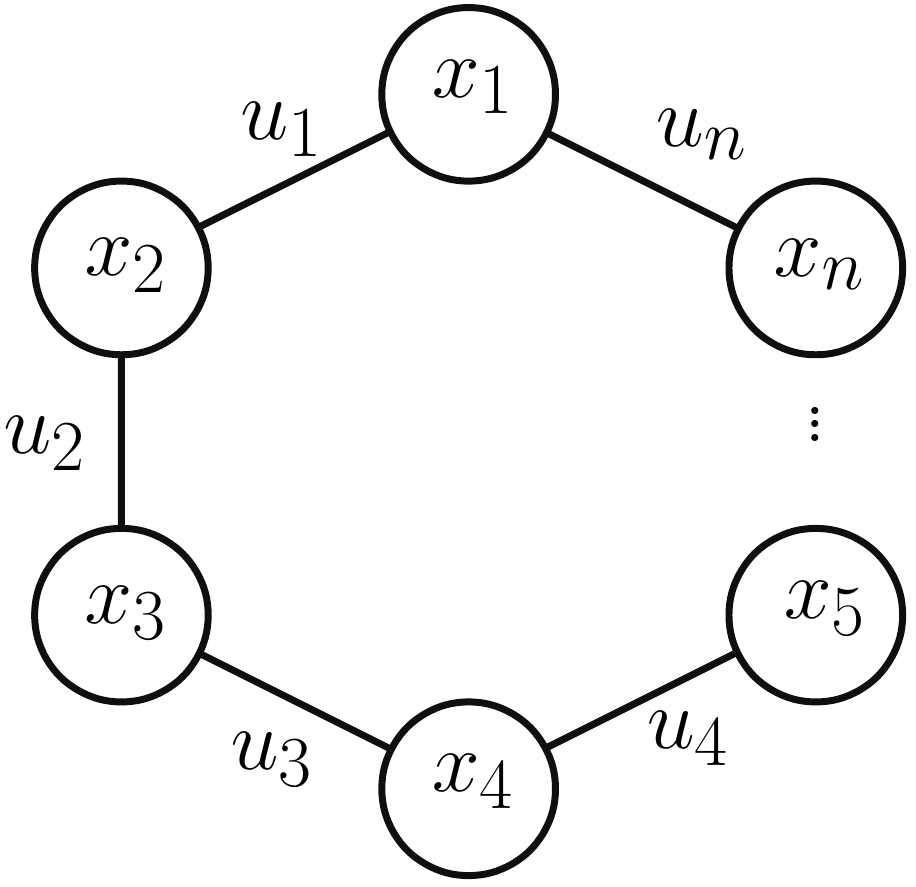}
\caption{Battery ring network}
\label{fig:batterynetwork}
\end{figure}
The states $x_t$ of the system are the amount of charge on each battery. The inputs $u_t$ are the current flows across each edge of the network. Suppose the maximum current is given by $I_{max}$ and the capacity of each battery is given by $C$. Then the system dynamics can be approximated by a linear system update equation of the form
\begin{equation}
x_{t+1}=x_{t}+\frac{I_{max}}{C}L(u_{t}+d_{t}),
\label{eq:batteryeqn}
\end{equation}
where $L$ is the Laplacian matrix of the graph and $d_t$ is a stochastic disturbance to the edge flows. The constraints on the system are $x_t\in[0,1]$, $u_t\in[-1,1]$, and $d_t\in[-0.1,0.1]$. We are interested in balancing the charges on the battery while minimizing the amount of charge moved on each edge. The problem (\ref{eq: evp}) with cost (\ref{eq: quadcost}) can be directly applied to solve this problem with
\[Q=I_{n}-\frac{1}{n}J_n \text{ and }R=10^{-6}I_n,\] 
where $J_n$ is a $n\times n$ matrix of ones. The set of symmetries of this problem is the dihedral group $D_n$, which can be exploited to reduce computation time and storage requirements.

We solved the problem with $n=5$, $I_{max}=5$,$C=3.6\cdot 10^5$, and $N=2$. The table below compares the solution times and number of critical regions with and without the use of symmetry.

\begin{table}
\center
\begin{tabular}{|c|c|c|}
\hline
&Computation time (s)&Number of critical regions\\
\hline
N=1&10.8&211\\
\hline
N=2&6,580&1998\\
\hline
N=3&63,800&8684\\
\hline
\end{tabular}
\caption{Battery network without symmetry}
\end{table}

\begin{table}
\center
\begin{tabular}{|c|c|c|}
\hline
&Computation time (s)&Number of critical regions\\
\hline
N=1&14.1&26\\
\hline
N=2&647&213\\
\hline
N=3&16,000&904\\
\hline
\end{tabular}
\caption{Battery network with symmetry}
\end{table}
\subsection{Radiant Slab System}

Consider the following radiant-slab system implemented at the Brower Center in Berkeley, CA. The system can be represented by the state vector $T_t=\begin{bmatrix} T_{slab,t}&T_{room,t}\end{bmatrix}^T$, where $T_{slab}$ is the temperature of the radiant slab and $T_{room}$ is the temperature of the room. Let $u_t$ be the temperature of the water supplied to the radiant slab. The radiant slab system can be approximated by a linear system update equation of the form
\begin{equation}
T_{t+1}=AT_{t}+Bu_{t}+Wd_{t},
\label{eq:browereqn}
\end{equation}
where
\[A=\begin{bmatrix} 0.9579&0.0406\\0.0093&0.9883\end{bmatrix}, B=\begin{bmatrix} 0.0016\\0\end{bmatrix},W=\begin{bmatrix}0\\0.0025\end{bmatrix},\]
and $d_{t}$ is the outside air temperature at time $t$, with time measured in hours. The parameters in equation (\ref{eq:browereqn}) were identified by performing step-tests on the actual building.

We are interested in controlling the water temperature supplied to the slabs to maintain the room air temperature close to a comfortable temperature of $70^\circ F$. The supply water temperature is constrained to be within $55^\circ F$ and $90^\circ F$. We investigate controlling the building temperature on a hot summer day, with a $48$-hour outside air temperature prediction, $OAT_t$, as shown in Figure (\ref{fig:oat}).

\begin{figure}[htbp]
\centering
\includegraphics[scale=0.5]{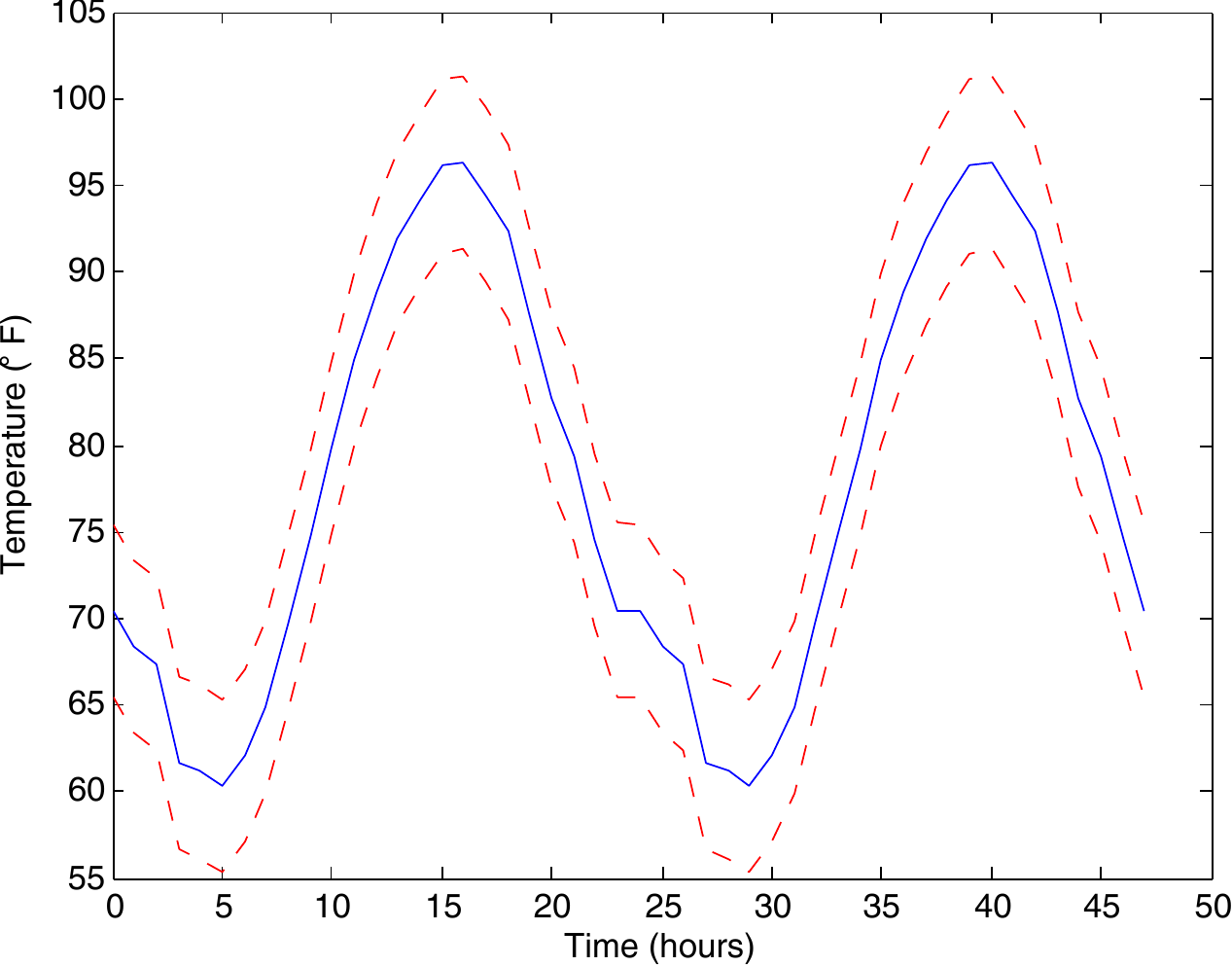}
\caption{Outside air temperature}
\label{fig:oat}
\end{figure}
We assume that the weather prediction has a $5$ degree radius uncertainty, which is shown by the dotted bounding lines above and below the nominal temperature profile. Suppose that we wish to maintain the room temperature, $T_{r,t}$, close to an optimal temperature of $70^\circ F$ while minimizing energy usage. Suppose that the water supply temperature, $u_t$, has a nominal temperature of $70^\circ F$ and that changing the water temperature from the nominal temperature will require energy. Suppose for a horizon of $N$ hours we would like to minimize the cost
\[E\left(\sum_{t=0}^{N-1} \left[(T_{r,t}-70)^2+\rho (u_t-70)^2\right]+(T_{r,N}-70)^2\right)\]
subject to the robust constraint $\begin{bmatrix}55\\60\end{bmatrix}\le T_t\le \begin{bmatrix}90\\80\end{bmatrix}$ and $u_t\in[55,90]$.
In order to write the cost in the form \ref{eq: quadcost}, we introduce new states $\tilde{T}_t=T_t-\begin{bmatrix}0\\70\end{bmatrix}$ and $\tilde{u}_t=u_t-70$. By straightforward substitution, the state update equation becomes
\[\tilde{T}_{t+1}=A\tilde{T}+B\tilde{u}+A\begin{bmatrix}0\\70\end{bmatrix}+70B-\begin{bmatrix}0\\70\end{bmatrix}+Wd_t\]
and the cost becomes
\[f_t(\tilde{T}_t,\tilde{u}_t)=\tilde{T}^T_t\begin{bmatrix}0&0\\0&1\end{bmatrix}\tilde{T}_t+\rho \tilde{u}_t^2,f_N(\tilde{T}_N)=\tilde{T}^T_N\begin{bmatrix}0&0\\0&1\end{bmatrix}\tilde{T}_N\]
For a horizon of $24$ hours, the figure \ref{fig: brower} shows the set of initial states $T_0$ such that the certainy equivalence approximation can be used to obtain an exact solution to problem \ref{eq: evp}.

\begin{figure}[htbp]
\centering
\includegraphics[width=\linewidth]{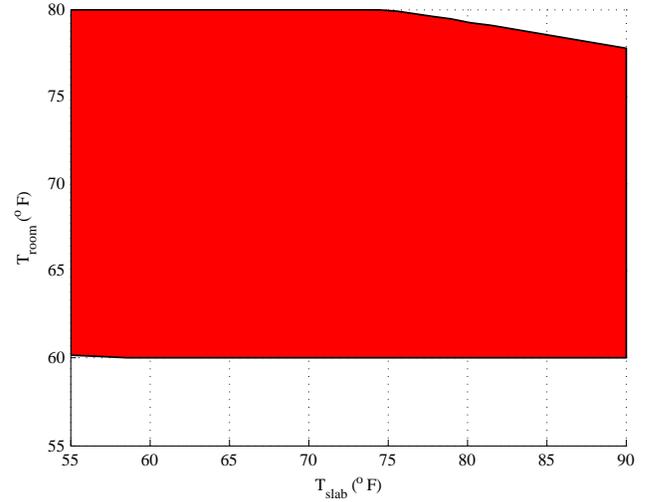}
\label{fig: brower}
\caption{$T_0$ for which CE is exact}
\end{figure}

The plot shows that for our radiant-slab system, the set of states for which certainty equivalence can be applied covers almost the entire operating regime. This shows that for the system and problem under consideration, there is little value in knowing the distribution of the disturbance beyond the first moment.

\section{Conclusion}
This paper considered finite-time expected value optimization problems for linear systems with additive stochastic disturbance subject to robust constraints. We considered problems with quadratic cost separable in time so that dynamic programming can be applied. We presented an algorithm to compute regions of the state space such that the solution over feedback policies that satisfies robust constraints and minimizes the expected cost is the solution obtained by certainty equivalence. We also presented an algorithm which takes advantage of symmetries in the MPC problem to drastically reduce computation time and memory requirements. 
The algorithm was demonstrated on three numerical problems including a model of the Brower Center in Berkeley, CA. We showed that for the radiant-slab system, the certainty equivalence approximation is exact for a large portion of the operating regime.  The methodology of this paper allowed us to rigorously confirm our intuition that this system with its high capacitance should be resistant to variations in the disturbance. We also demonstrated with the integrator and battery network systems that symmetries can drastically reduce computation time and memory requirements.

\bibliography{CEBound}

\begin{thebibliography}{10}
\providecommand{\url}[1]{#1}
\csname url@samestyle\endcsname
\providecommand{\newblock}{\relax}
\providecommand{\bibinfo}[2]{#2}
\providecommand{\BIBentrySTDinterwordspacing}{\spaceskip=0pt\relax}
\providecommand{\BIBentryALTinterwordstretchfactor}{4}
\providecommand{\BIBentryALTinterwordspacing}{\spaceskip=\fontdimen2\font plus
\BIBentryALTinterwordstretchfactor\fontdimen3\font minus
  \fontdimen4\font\relax}
\providecommand{\BIBforeignlanguage}[2]{{%
\expandafter\ifx\csname l@#1\endcsname\relax
\typeout{** WARNING: IEEEtran.bst: No hyphenation pattern has been}%
\typeout{** loaded for the language `#1'. Using the pattern for}%
\typeout{** the default language instead.}%
\else
\language=\csname l@#1\endcsname
\fi
#2}}
\providecommand{\BIBdecl}{\relax}
\BIBdecl

\bibitem{BET95}
D.~Bertsekas, \emph{Dynamic Programming and Optimal Control}.\hskip 1em plus
  0.5em minus 0.4em\relax Belmont, Massachusetts: Athena Scientific, 1995.

\bibitem{SBLecture}
\BIBentryALTinterwordspacing
S.~Boyd, ``Stochastic model predictive control,'' University Lecture. [Online].
  Available: \url{http://see.stanford.edu}
\BIBentrySTDinterwordspacing

\bibitem{WangAhmed}
\BIBentryALTinterwordspacing
W.~Wang and S.~Ahmed, ``Sample average approximation of expected value
  constrained stochastic programs,'' \emph{Oper. Res. Lett.}, vol.~36, no.~5,
  pp. 515--519, Sep. 2008. [Online]. Available:
  \url{http://dx.doi.org/10.1016/j.orl.2008.05.003}
\BIBentrySTDinterwordspacing

\bibitem{goulart}
P.~J. Goulart, E.~C. Kerrigan, and J.~M. Maciejowski, ``Optimization over state
  feedback policies for robust control with constraints,'' \emph{Automatica},
  vol.~42, no.~4, pp. 523 -- 533, 2006.

\bibitem{ma2012fast}
Y.~Ma, S.~Vichik, and F.~Borrelli, ``Fast stochastic mpc with optimal risk
  allocation applied to building control systems,'' in \emph{Decision and
  Control (CDC), 2012 IEEE 51st Annual Conference on}.\hskip 1em plus 0.5em
  minus 0.4em\relax IEEE, 2012, pp. 7559--7564.

\bibitem{BIP}
\BIBentryALTinterwordspacing
D.~Bertsimas, D.~A. Iancu, and P.~A. Parrilo, ``Optimality of affine policies
  in multistage robust optimization,'' \emph{Mathematics of Operations
  Research}, vol.~35, no.~2, pp. 363--394, 2010. [Online]. Available:
  \url{http://pubsonline.informs.org/doi/abs/10.1287/moor.1100.0444}
\BIBentrySTDinterwordspacing

\bibitem{HGK}
M.~Hadjiyiannis, P.~Goulart, and D.~Kuhn, ``An efficient method to estimate the
  suboptimality of affine controllers,'' \emph{Automatic Control, IEEE
  Transactions on}, vol.~56, no.~12, pp. 2841--2853, Dec 2011.

\bibitem{VGM}
B.~Van~Parys, P.~Goulart, and M.~Morari, ``Infinite horizon performance bounds
  for uncertain constrained systems,'' \emph{Automatic Control, IEEE
  Transactions on}, vol.~58, no.~11, pp. 2803--2817, Nov 2013.

\bibitem{Bodi2010}
R.~Bodi, T.~Grundhofer, and K.~Herr, ``Symmetries of linear programs.''
  \emph{Note di Matematica}, 2010.

\bibitem{Gatermann2004}
K.~Gatermann and P.~Parrilo, ``Symmetry groups, semidefinite programs, and sums
  of squares,'' \emph{Journal of Pure and Applied Algebra}, 2004.

\bibitem{Bodi2011}
R.~Bodi, K.~Herr, and M.~Joswig, ``Algorithms for highly symmetric linear and
  integer programs,'' \emph{Mathematical Programming}, 2011.

\bibitem{Fagnani1991}
F.~Fagnani and J.~Willems, ``Representations of symmetric linear dynamical
  systems,'' in \emph{Conference on Decision and Control}, 1991.

\bibitem{Hazewinkel1983}
M.~Hazewinkel and C.~Martin, ``Symmetric linear systems: An application of
  algebraic systems theory,'' in \emph{International Journal of Control}, 1983.

\bibitem{Cogill2008}
R.~Cogill, S.~Lall, and P.~Parrilo, ``Structured semidefinite programs for the
  control of symmetric systems,'' \emph{Automatica}, 2008.

\bibitem{Danielson2014}
C.~Danielson and F.~Borrelli, ``Symmetric linear model predictive control,''
  \emph{Submitted to IEEE Transactions on Automatic Control}, 2014.

\bibitem{BBM}
F.~Borrelli, A.~Bemporad, and M.~Morari, \emph{{Constrained Optimal Control and
  Predictive Control}}.\hskip 1em plus 0.5em minus 0.4em\relax Springer, 2010.

\bibitem{CACL}
\BIBentryALTinterwordspacing
E.~Cinquemani, M.~Agarwal, D.~Chatterjee, and J.~Lygeros, ``{On convex problems
  in chance-constrained stochastic model predictive control},''
  \emph{Automatica}, 2009. [Online]. Available:
  \url{http://control.ee.ethz.ch/index.cgi?page=publications;action=details;id=3317}
\BIBentrySTDinterwordspacing

\bibitem{FK86}
A.~V. Fiacco and J.~Kyparisis, ``Convexity and concavity properties of the
  optimal value function in parametric nonlinear programming,'' \emph{J. Opt.
  Theory Appl.}, vol.~48, no.~1, pp. 95--126, January 1986.

\bibitem{Danielson2013b}
C.~Danielson and F.~Borrelli, ``Identication of the symmetries of linear
  systems with polytopic constraints,'' in \emph{American Control Conference},
  2014.

\bibitem{OlkinPratt}
I.~Olkin and J.~Pratt, ``A multivariate tchebycheff inequality,'' \emph{Ann.
  math. Statist.}, vol.~29, p. 488–491, 1958.

\end{thebibliography}
\bibliographystyle{IEEEtran}
\end{document}